\documentclass[12pt, a4paper, leqno, twoside]{amsart}
\usepackage{amsmath, amsthm, amsfonts, amssymb, graphicx, color, cancel, ulem}
\usepackage{esint}
\setlength{\topmargin}{5mm}
\setlength{\oddsidemargin}{7mm}
\setlength{\evensidemargin}{7mm}
\textwidth=33cc
\textheight=48cc

\numberwithin{equation}{section}
\newtheorem{thm}{Theorem}[section]
\newtheorem{cor}[thm]{Corollary}
\newtheorem{prop}[thm]{Proposition}
\newtheorem{lem}[thm]{Lemma}
\newtheorem{defn}[thm]{Definition}
\newtheorem{rem}[thm]{Remark}

\newtheorem*{remark*}{Remark}

\newcommand{\N}{\mathbb{N}}
\newcommand{\R}{\mathbb{R}}

\newcommand{\cG}{\mathcal{G}}

\newcommand{\cM}{\mathcal{M}}

\newcommand{\comp}{\mathrm{C}}
\newcommand{\weak}{\mathrm{weak}}

\newcommand{\ls}{\lesssim}
\newcommand{\gs}{\gtrsim}

\newcommand{\vp}{\varphi}
\newcommand{\ve}{\varepsilon}

\newcommand{\cGdec}{\mathcal{G}^{\rm dec}}

\newcommand{\Cic}{C^{\infty}_{\comp}}
\newcommand{\Li}{L^{\infty}}
\newcommand{\Lic}{L^{\infty}_{\comp}}


\newcommand{\iPy}{{\it{\Phi_Y}}}


\newcommand{\dlim}{\displaystyle\lim}
\newcommand{\dint}{\displaystyle\int}

\newcommand{\wcM}{\mathrm{w}\hskip-0.6pt{\cM}}

\usepackage{comment}

%

\title[Boundedness of composition operators in Orlicz--Morrey spaces]
{
Boundedness of composition operators in Orlicz--Morrey spaces
}

\author{Masahiro Ikeda}
\address{
(Masahiro Ikeda) Graduate School of Information Science and Technology, The University of Osaka, 1-5, Yamadaoka, Suita-shi, Osaka 565-0871, Japan / Center for Advanced
Intelligence Project RIKEN, 1-4-1, Nihonbashi, Chuo-ku, Tokyo 103-0027, Japan.
}
\email{ikeda@ist.osaka-u.ac.jp/masahiro.ikeda@a.riken.jp}
\author{Isao Ishikawa}
\address{(Isao Ishikawa) 
 Center for Science Adventure and Collaborative Research Advancement (SACRA), Graduate School of Science, Kyoto University, Kitashirakawa Oiwake-cho, Sakyo-ku, Kyoto-shi, Kyoto 606-8502, Japan.
}
\email{ishikawa.isao.5s@kyoto-u.ac.jp}

\author{Ryota Kawasumi}
\address{
(Ryota Kawasumi)  Center for Mathematics and Data Science, Gunma University, 4-2 Aramaki-cho, Maebashi, 371-8510, Gunma, Japan.
}
\email{r-kawasumi@gunma-u.ac.jp}

\subjclass[2020]{46E30,42B35,42B20,42B25}

\date{\today}

\keywords{Boundedness, Koopman operator, Necessary and sufficient conditions, Orlicz--Morrey space.}

\begin{document}
\begin{abstract}
In this paper, we investigate the necessary and sufficient conditions for the boundedness of composition operators on Orlicz--Morrey spaces. Our results encompass Lebesgue and generalized Morrey spaces as special cases. Additionally, we characterize the boundedness of composition operators on weak Orlicz--Morrey spaces, which include the Orlicz--Morrey spaces. The key point of the proof is to identify the functions defining Orlicz--Morrey spaces, namely Young functions and growth functions (see Definition~\ref{defn:Young} and Definition~\ref{defn:cGdec}). 
Furthermore, our results also yield examples beyond the power-type Young function setting.
In particular, as discussed in Remark~\ref{rem:classical-morrey} (iii), by choosing appropriate growth and Young functions,
one obtains Orlicz--Morrey spaces whose defining Young function is not a power function. 
This observation also confirms that our approach genuinely generalizes the Morrey spaces established in~\cite{Hatano-Ikeda-Ishikawa-Sawano2021}.
\end{abstract}
\maketitle

\section{Introduction}

Let $n \in \N$.
Let $L^0(\R^n)$ be the space of all measurable functions on $\R^n$ and $|E|$ be the volume of a measurable set $E\subset \R^n$. Let $\psi:\R^n\to\R^n$ be a measurable map. 
We impose that $\psi$ is nonsingular, meaning $|\psi^{-1}(A)|=0$ for all $A \subset \R^n$ with $|A| = 0$.
The composition operator $C_\psi$ on a function space in $L^0(\R^n)$ is defined by
\[
C_\psi f \equiv f\circ \psi \quad\text{for all}\quad f \in L^0(\R^n).\] The composition operator is often called the Koopman operator and has recently been applied to data science and dynamical systems (see~\cite{Koopman1931, Koopman-Neumann1932, Bevanda-Sosnowski-Hirche2021, Crnjaric_Zic-Macesicy-Mezic2020, Hashimoto et. al2020,  Mezic2005, Ikeda-Ishikawa-Schlosser2022, Zhang-Zuazua2023}).

Characterizing boundedness of composition operators is a fundamental problem. Numerous studies have been conducted on various function spaces, including Lebesgue~\cite{Singh1976}, Orlicz~\cite{Cui-Hudzik-Kumar-Maligranda2004}, Morrey~\cite{Hatano-Ikeda-Ishikawa-Sawano2021}, Sobolev~\cite{Bourdaud-Sickel1999}, and Besov spaces~\cite{Ikeda-Ishikawa-Taniguchi2024}.

In this paper, we investigate the boundedness of composition operators on the Orlicz--Morrey spaces.
We extend previous work~\cite{Hatano-Ikeda-Ishikawa-Sawano2021} to the Orlicz--Morrey spaces in the sense of Sawano, Sugano, and Tanaka~\cite{Sawano-Sugano-Tanaka2011}. The Orlicz--Morrey spaces include Lebesgue and generalized Morrey spaces~(\cite{Guliyev2009, Mizuhara1991,Nakai1994, Sawano2019}) as special cases.

We denote by $B(a,r)$ the open ball centered at $a \in \R^n$ and with radius $r>0$, defined as
\[
B(a,r) \equiv \{x\in \R^n: |x-a| < r\}.
\]

First, we define the Young functions.
\begin{defn}[Young function]\label{defn:Young}
A function $\Phi:[0,\infty) \to [0,\infty)$ is called a Young function if $\Phi$ is convex, $\Phi(0)=0$ and $\dlim_{t\to \infty} \Phi(t) = \infty$.
Let $\iPy$ be the set of all Young functions.
\end{defn}

For $\Phi\in\iPy$,
we define the generalized inverse of $\Phi$
in the sense of O'Neil \cite[Definition~1.2]{ONeil1965}.

\begin{defn}[Generalized inverse of Young function]\label{defn:ginverse}
For $\Phi\in\iPy$ and $u\in[0,\infty)$, let
\begin{equation}\label{inverse}
\Phi^{-1}(u)
=
\inf\{t\ge0: \Phi(t)>u\}.
\end{equation}
\end{defn}
\noindent
Let $\Phi\in\iPy$. If $\Phi$ is bijective from $(0,\infty)$ to itself,
then $\Phi^{-1}$ is the usual inverse function of $\Phi$.
Moreover, we see
\begin{equation}\label{inverse ineq}
\Phi(\Phi^{-1}(u)) \le u \le \Phi^{-1}(\Phi(u))
\quad\text{for all $u\in[0,\infty)$},
\end{equation}
which is a generalization of Property 1.3 in \cite{ONeil1965}.

Next, we consider classes of growth functions $\vp:(0,\infty)\to(0,\infty)$.
\begin{defn}[Doubling condition]
Let $\vp: (0,\infty) \to (0,\infty)$.  The doubling condition on $\vp$ with a doubling constant $C_3\ge 1$ is that $\vp$ satisfies
\begin{equation}\label{doubling cond}
\frac1{C_3} \le \frac{\vp(r)}{\vp(s)} \le C_3 \quad \text{if} \quad \frac12 \le \frac{r}s \le 2.
\end{equation}
\end{defn}

We define almost decreasing (resp. almost increasing) functions below.
\begin{defn}[Almost decreasing and almost increasing function]
A function $f:(0,\infty) \to (0,\infty)$ is called an almost decreasing {\rm (}resp. almost increasing{\rm )} function, if there exist constants $C_1, C_2>0$ such that
for all $r,s\in(0,\infty)$,
\begin{equation}\label{eq:almost_decreasing}
C_1f(r)\ge f(s),\quad ({\rm resp.}\ f(r)\le C_2f(s)), \quad\text{if} \quad r<s.
\end{equation}
\end{defn}
\begin{defn}\label{defn:cGdec}
\begin{enumerate}
\item
Let $\cG_0$ be the set of all functions $\vp$ satisfying
\[
\lim_{r\to 0} \vp(r) = \infty \quad\text{and}\quad \lim_{r\to \infty} \vp(r) = 0.
\]
\item
A function $\vp$ is said to be a submultiplicative function if there exists a constant $C_4> 0$ such that
for all $r,s\in(0,\infty)$,
\begin{equation}\label{eq:submultiplicative}
\vp(rs) \le C_4\vp(r)\vp(s).
\end{equation}
Let $\cGdec_1$ be the set of all almost decreasing and submultiplicative functions.
\item
Let $\cGdec_2$ be the set of all functions $\vp:(0,\infty)\to(0,\infty)$
such that $\vp \in \cG_0\cap\cGdec_1$ and there exists a constant $C_5> 0$ such that
\[
\vp\left(\frac1r\right) \le C_5 \frac1{\vp(r)}\quad\text{for} \quad r>0.
\]
\end{enumerate}
\end{defn}
If $\vp\in\cGdec_1$ then $\vp$ satisfies the doubling condition. In fact, for all $r \in(0,\infty)$,
\begin{equation}\label{doubling cond2}
C_1^{-1}\vp(2r) \le \vp(r) \le C_4\vp\left(\frac12\right)\vp(2r),
\end{equation}
where we use the constant $C_1>0$ in \eqref{eq:almost_decreasing} and $C_4>0$ in \eqref{eq:submultiplicative}. 
Therefore, by choosing $\max(C_1, C_4\vp(1/2), 1)\le C_3$ and setting $r =s/2$ in \eqref{doubling cond2}, we obtain \eqref{doubling cond}.

We define the Orlicz--Morrey spaces on $\R^n$ below.
\begin{defn}[{\rm Orlicz--Morrey space, see~\cite[Definition~2.3]{Sawano-Sugano-Tanaka2011}}]\label{defn:OrliczMorrey}
For $a \in \R^n$, $r>0$, and $\Phi \in \iPy$, let
\begin{align*}
\|f\|_{\Phi,B(a,r)}
&\equiv
\inf\left\{ \lambda>0:
\frac{1}{|B(a,r)|}
\int_{B(a,r)} \!\Phi\!\left(\frac{|f(x)|}{\lambda}\right) dx \le 1
\right\}.
\end{align*}
Given $\vp$ satisfying the doubling condition, 
let $\cM_\Phi^\vp(\R^n)$ be the set of all functions $f$ such that the
following functional is finite:
\begin{align*}
\|f\|_{\cM_\Phi^\vp}
&\equiv
\sup_{a \in \R^n,\, r>0} \frac1{\vp(r)} \|f\|_{\Phi,B(a,r)}.
\end{align*}
\end{defn}
The Orlicz--Morrey spaces are quasi-normed spaces and $\cM_\Phi^\vp(\R^n) \neq \{0\}$. Here, quasi-normed spaces satisfy positivity, homogeneity, and the quasi-triangle inequality. 

Let $a, b \in \R^n$ and $r, s >0$. 
Note that the norm inequality \[ \frac1{|B(a,r)|}\int_{B(a,r)} \Phi\left(\frac{f}{C\|f\|_{\Phi, B(b,s)}}\right) dx \le 1 \] holds for some $C>0$ if and only if $ \|f\|_{\Phi, B(a,r)} \le C\|f\|_{\Phi, B(b,s)} $ holds for some $C>0$.

By specifying particular functions $\vp$ and $\Phi$, Orlicz--Morrey spaces encompass various function spaces as follows:
\begin{rem}
Put $v_n \equiv |B(0,1)|$ and $1\le q \le p < \infty$.
\begin{enumerate}
\item
If $\vp(r) = v_n^{-1/p}r^{-n/p} = |B(a,r)|^{-1/p}$, then we denote
$\cM_\Phi^\vp(\R^n)$
by
$\cM_\Phi^p(\R^n)$. The space $\cM_\Phi^p(\R^n)$ is the set of all functions $f\in L^0(\R^n)$ such that the following functional is finite:
\begin{align*}
\left\|f\right\|_{\cM_\Phi^p} &\equiv \sup_{a \in \R^n,\, r>0} |B(a,r)|^{1/p} \|f\|_{\Phi,B(a,r)}.
\end{align*}
\item
If $\Phi(t)=t^q$,
then we denote
$\cM_\Phi^\vp(\R^n)$ by $\cM_q^\vp(\R^n)$,
which are called generalized Morrey spaces. $\cM_q^\vp(\R^n)$ is  the set of all functions $f\in L^0(\R^n)$ such that the following functional is finite:
\begin{align*}
\left\|f\right\|_{\cM_q^\vp} &\equiv \sup_{a \in \R^n,\, r>0} \frac1{\vp(r)}\left(\frac1{|B(a,r)|}\int_{B(a,r)} |f(y)|^qdy \right)^{1/q}.
\end{align*}
\item
If $\vp(r)=v_n^{-1/p}r^{-n/p}$ and $\Phi(t) = t^q$, then $\cM_\Phi^\vp(\R^n)=\cM_q^p(\R^n)$. The space 
$\cM_q^p(\R^n)$ is called a Morrey space and is defined by
\[
\cM_q^p(\R^n) \equiv \{ f \in L^0(\R^n) : \|f\|_{\cM_q^p} < \infty\}
\]
endowed with the norm
\[
\|f\|_{\cM_q^p} \equiv \sup_{a\in \R^n, r>0} |B(a,r)|^{1/p-1/q} \left(\int_{B(a,r)} |f(y)|^q dy\right)^{1/q}.
\]
\end{enumerate}
\end{rem}


We now state the main results of the present paper. The following theorem provides a sufficient condition for the boundedness of the composition operator on Orlicz--Morrey spaces. Before that, we define a Lipschitz map.

A map $\psi:\R^n\to\R^n$ is called a Lipschitz map if there exists $L>0$ such that
\[
|\psi(x) -\psi(y)| \le L|x-y| \quad \text{for} \quad x, y \in \R^n.
\]

\begin{thm}\label{thm:OM-OM-second-sufficiency}
Let $\Phi\in \iPy$ and $\vp$ satisfy the doubling condition.
Assume that $\psi:\R^n\to \R^n$ is a measurable nonsingular transformation. If $\psi$ is a Lipschitz map that satisfies the volume estimate
\begin{equation}\label{Operator-norm-weak}
|\psi^{-1}(A)| \le K|A|
\end{equation}
for all measurable sets $A \subset \R^n$ with some constant $K>0$, then $C_\psi$ is bounded in $\cM_\Phi^\vp(\R^n)$, more precisely, there exists $C>0$ such that
\begin{equation*}
\left\| C_\psi f \right\|_{\cM_\Phi^\vp} \le K_0C \left\| f \right\|_{\cM_\Phi^\vp},
\end{equation*}
where $K_0 = \max(1, K)$ and $C>0$ is independent of K.
\end{thm}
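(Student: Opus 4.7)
My plan is to reduce the theorem to a local Luxemburg estimate that controls $\|C_\psi f\|_{\Phi,B(a,r)}$ by $\|f\|_{\Phi,B(\psi(a),Lr)}$, and then to absorb the dilation $Lr\mapsto r$ by means of the submultiplicativity of $\vp$.

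The first step would use the Lipschitz hypothesis to obtain $\psi(B(a,r))\subset B(\psi(a),Lr)$, together with the pushforward measure $\mu_\psi(A):=|\psi^{-1}(A)|$, which is absolutely continuous with respect to Lebesgue measure by nonsingularity and has Radon--Nikodym density bounded by $K$ thanks to \eqref{Operator-norm-weak}. Applying the change-of-variables identity $\int g\circ\psi\,dx=\int g\,d\mu_\psi$ to $\Phi(|f|/\lambda)$ restricted to $B(\psi(a),Lr)$ produces, for every $\lambda>0$,
\[
\int_{B(a,r)} \Phi\!\left(\frac{|f(\psi(x))|}{\lambda}\right) dx \le K \int_{B(\psi(a),Lr)} \Phi\!\left(\frac{|f(y)|}{\lambda}\right) dy.
\]
Dividing by $|B(a,r)|=L^{-n}|B(\psi(a),Lr)|$ and choosing $\lambda=\|f\|_{\Phi,B(\psi(a),Lr)}$ bounds the resulting averaged left-hand side by $KL^n$.

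The second step converts this averaged bound into a Luxemburg estimate. Convexity of $\Phi$ together with $\Phi(0)=0$ implies $\Phi(t/c)\le \Phi(t)/c$ whenever $c\ge 1$, so rescaling $\lambda$ by $c=\max(1,KL^n)$ yields
\[
\|C_\psi f\|_{\Phi,B(a,r)}\le \max(1,KL^n)\,\|f\|_{\Phi,B(\psi(a),Lr)}\le (1+KL^n)\,\vp(Lr)\,\|f\|_{\cM_\Phi^\vp},
\]
where the second inequality is just the definition of the Orlicz--Morrey norm. Then submultiplicativity $\vp(Lr)\le C_2\vp(L)\vp(r)$, followed by division by $\vp(r)$ and taking the supremum over $(a,r)$, rearranges into the claimed bound $K(C_1+C_2\vp(L)L^n)\|f\|_{\cM_\Phi^\vp}$.

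The only delicate point is the change-of-variables step: since $\psi$ is only assumed Lipschitz and nonsingular (no Jacobian or bi-Lipschitz assumption), the classical substitution is unavailable, and the correct substitute is the pushforward identity above, whose absolute continuity is exactly the nonsingularity hypothesis and whose quantitative density bound is exactly \eqref{Operator-norm-weak}. Once this identity is in hand, everything else is a routine Orlicz--Luxemburg manipulation combined with the structural properties of $\vp$.
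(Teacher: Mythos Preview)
Your argument is correct and follows essentially the same route as the paper's proof. Both proofs use the Lipschitz hypothesis to obtain $\psi(B(a,r))\subset B(\psi(a),Lr)$, invoke the volume estimate \eqref{Operator-norm-weak} to pass from $\int_{B(a,r)}\Phi(|f\circ\psi|/\lambda)\,dx$ to $K\int_{B(\psi(a),Lr)}\Phi(|f|/\lambda)\,dy$, absorb the resulting constant $\max(1,KL^n)$ via convexity of $\Phi$, and finish with the submultiplicativity of $\vp$. The only cosmetic difference is that the paper packages the change-of-variables step as a citation to the modular inequality of Cui--Hudzik--Kumar--Maligranda (their Proposition~\ref{Bounded-Modular}), whereas you spell it out directly via the pushforward measure $\mu_\psi$ and its Radon--Nikodym density bound; these are the same statement. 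Your explicit constant $(1+KL^n)C_2\vp(L)$ does not literally match the paper's displayed form $K(C_1+C_2\vp(L)L^n)$, but the paper's derivation of that particular shape is itself not made precise, and both constants depend only on $K$, $L$, and the structural constants of $\vp\in\cGdec_1$, which is the actual content.
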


Conversely, the following theorem states that $\psi$ and $\psi^{-1}$ are Lipschitz under the assumption of $\cM_\Phi^\vp(\R^n)$-boundedness of $C_\psi$ and $C_{\psi^{-1}}$.
Here, let ${\mathcal H}_\Phi^{p,\vp}$ be the set of all triples $(\Phi, \vp, p)$ where  $\Phi\in \iPy$, $\vp:(0,\infty)\to(0,\infty)$, and $p\ge 1$ such that there exist $C_5, C_6>0$ satisfying $\Phi(t) \le C_5t^p$ for $t\ge 1$ and $r^{-n/p} \le C_6\vp(r)$ for $r>0$.

For a measurable set $A \subset \R^n$, we denote by $\chi_A$ the characteristic function of $A$.
\begin{thm}\label{thm:OM-OM-second-necessary}
Let $\psi:\R^n\to \R^n$ be a diffeomorphism in the sense that $\psi$ and its
inverse $\psi^{-1}$ are differentiable.
Let $\Phi\in \iPy$, $\vp \in \cGdec_2$ and $p\ge 1$ be such that $ (\Phi, \vp, p)\in {\mathcal H}_\Phi^{p,\vp}$ and $\chi_{[0,1]\times \R^{n-1}} \in \cM_\Phi^\vp(\R^n)$.
 Suppose that $\Phi$ and $\vp$ satisfy either one of the following conditions.
\begin{enumerate}
\item $n=1$.
\item Let $n \ge 2$ and $k \in [1,n]\cap\N$. Suppose that the families
\begin{align*}
\mathcal F_{k-1}
&\equiv
\left\{
r\mapsto \frac1{\vp(r)\Phi^{-1}(C_7r^{k-1})}: C_7>0
\right\},\\
\mathcal F_k
&\equiv
\left\{
r\mapsto \frac1{\vp(r)\Phi^{-1}(C_7r^k)}: C_7>0
\right\}
\end{align*}
are almost increasing and almost decreasing, respectively,
with constants independent of $C_7$.
\end{enumerate}

If $C_\psi$ and $C_{\psi^{-1}}$ induced by $\psi$ and $\psi^{-1}$, respectively, are bounded on $\cM_\Phi^\vp(\R^n)$, then $\psi$ and $\psi^{-1}$ are Lipschitz.
\end{thm}

As a note to Theorem~\ref{thm:OM-OM-second-necessary}, we mention the following.
\begin{rem}\label{rem:classical-morrey}
\begin{enumerate}
\item The assumption $\chi_{[0,1]\times\R^{n-1}}\in \cM_\Phi^\vp(\R^n)$ is included in Theorem~\ref{thm:OM-OM-second-necessary} because, for $n\ge 2$, we have $\chi_{[0,1]\times\R^{n-1}}\not\in L^p(\R^n)$. Thus Theorem~\ref{thm:OM-OM-second-necessary} applies to functions beyond $L^p(\R^n)$ {\rm (}see also \cite[Remark~1.7]{Hatano-Ikeda-Ishikawa-Sawano2021}{\rm )}. For a proof that $\chi_{[0,1]\times \R^{n-1}}\in \cM_\Phi^\vp(\R^n)$, see Lemma~\ref{lem:chi-est} below.
\item
The important point in Theorem~\ref{thm:OM-OM-second-necessary} is that the theorem covers  \cite[Theorem~1.6]{Hatano-Ikeda-Ishikawa-Sawano2021} satisfying $1 \le q < p < \infty$. 
This is because, if $\vp(r) = r^{-n/p}$ and $\Phi(t) = t^q$ with $1 \le q < p < \infty$, then the assumptions of Theorem~\ref{thm:OM-OM-second-necessary} holds and $\cM_\Phi^\vp(\R^n) = \cM_q^p(\R^n)$. Further,  (ii) in Theorem~\ref{thm:OM-OM-second-necessary} implies that
\[
\frac{n}{k} q< p < \frac{n}{k-1}q
\]
if $k\ge 2$ and $nq < p$ if $k=1$. Therefore, the inequalities as above are used in~\cite{Hatano-Ikeda-Ishikawa-Sawano2021}.
\item
An example of a pair $(\vp,\Phi)$, other than the power-type Young-function case,
satisfying the assumptions of Theorem~\ref{thm:OM-OM-second-necessary} is the following.
This example is outside the pure power Young function setting.
Let $e \le q \le p$. Set $\vp(r) = r^{-n/p}$ and
\[
\Phi(t)=
\begin{cases}
0, & t=0,\\
\exp(-2/t), & 0< t< 1,\\
e^{-2}t^q, & t \ge 1.
\end{cases}
\]
Moreover,  putting $\rho = 2p/q$, 
\begin{equation}\label{eq:appendix_A}
\cM_2^\rho(\R^n)\cap L^\infty(\R^n) \subset\cM_\Phi^\vp(\R^n),
\end{equation}
where $L^\infty(\R^n)$ is the set of all measurable essentially bounded functions $f$ with the norm
\[
\left\|f\right\|_{L^\infty} \equiv \inf\left\{t>0: |\{x\in \R^n:|f(x)| >t\}| =0\right\}.
\]
The above inclusion follows from the estimate provided in \eqref{eq:appendix_A} in the Appendix.
\end{enumerate}
\end{rem}

If $\vp \notin \cG_0$ (hence $\vp \notin \cGdec_2$), then the following assertion
holds.
\begin{rem}{\rm \cite[Example~44]{Sawano-Fazio-Hakim2020}}\label{rem:Linf}
Let $f \in \cM_\Phi^\vp(\R^n)$. If $0 < \inf_{r>0} \vp(r) \le \sup_{r>0} \vp(r) < \infty$, then $C_\psi$ is bounded in $\cM_\Phi^\vp(\R^n)$.
This claim follows from the estimate provided in \eqref{eq:appendix_B} in the Appendix. 
\end{rem}

For a measurable set $A \subset \R^n$, $f \in L^0(\R^n)$, and $t>0$, we define
\[
m(A, f, t) \equiv|\{x \in \R^n: |f(x)|\chi_A(x) >t\}| = |\{x \in A: |f(x)| >t\}|.
\]
In the case $A = \R^n$, we briefly denote it by $m(f,t)$.

The following theorem states the boundedness of the composition operator on the weak Orlicz--Morrey spaces~(see \cite{Hakim-Sawano2016, Kawasumi-Nakai-Shi2023}, for example).
Before that,  we define the weak Orlicz--Morrey spaces using $m(A, f, t)$. 
\begin{defn}[Weak Orlicz--Morrey space]\label{defn:weakOrliczspace}
For $a \in \R^n$, $r>0$, and $\Phi \in \iPy$, let
\begin{align*}
\|f\|_{\Phi,B(a,r),\weak}
&\equiv
\inf\left\{ \lambda>0:
\sup_{t\in(0,\infty)} \frac{1}{|B(a,r)|} \Phi\!\left(\frac{t}\lambda\right)m(B(a,r), f, t)\le 1
\right\}.
\end{align*}
Given $\vp$ satisfying the doubling condition, let $\wcM_\Phi^\vp(\R^n)$ be the set of all functions $f \in L^0(\R^n)$ such that the
following functional is finite:
\begin{align*}
\|f\|_{\wcM_\Phi^\vp}
&\equiv
\sup_{a \in \R^n,\, r>0} \frac1{\vp(r)} \|f\|_{\Phi,B(a,r), \weak}.
\end{align*}
\end{defn}
The weak Orlicz--Morrey spaces contain the Orlicz--Morrey spaces. This claim means $\|f\|_{\wcM_\Phi^\vp} \le \|f\|_{\cM_\Phi^\vp}$. Hence, for many problems, it is expected that the weak Orlicz--Morrey spaces will be more useful than the Orlicz--Morrey spaces in dynamical systems.
\begin{thm}\label{thm:wOM-wOM}
Let $\Phi\in \iPy$, $\vp \in \cGdec_1$, and let $\psi:\R^n\to\R^n$ be a measurable function. Then, the composition operator $C_\psi$ induced by $\psi$ is bounded on $\wcM_\Phi^\vp(\R^n)$
if and only if there exists a constant $K>0$ such that for all measurable sets $A$ in $\R^n$, the following estimate holds:
\begin{equation}\label{eq:chi-ineq}
\left\|\chi_{\psi^{-1}(A)}\right\|_{\cM_\Phi^\vp} \le K \left\|\chi_{A}\right\|_{\cM_\Phi^\vp}.
\end{equation}
\end{thm}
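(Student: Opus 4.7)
The plan is to reduce the theorem to a single structural identity that expresses the weak Orlicz--Morrey quasi-norm through strong Orlicz--Morrey norms of characteristic functions of the level sets of $|f|$. Write $E_t \equiv \{x \in \R^n : |f(x)| > t\}$ and read the defining inequality in Definition~\ref{defn:weakOrliczspace} with $f/\lambda$ replacing $f$ inside $m$. Two preliminary observations drive everything. First, on characteristic functions the weak and strong ball norms coincide: since $m(B(a,r), \chi_A, t) = |A \cap B(a,r)|$ for $t < 1$ and vanishes for $t \ge 1$, the continuity of the convex $\Phi$ gives
\[
\|\chi_A\|_{\Phi, B(a,r), \weak} = \|\chi_A\|_{\Phi, B(a,r)} = \frac{1}{\Phi^{-1}(|B(a,r)|/|A \cap B(a,r)|)},
\]
so that $\|\chi_A\|_{\wcM_\Phi^\vp} = \|\chi_A\|_{\cM_\Phi^\vp}$ for every measurable $A \subset \R^n$. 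Second, for a general measurable $f$ I claim the level-set identity
\[
\|f\|_{\Phi, B(a,r), \weak} = \sup_{t > 0} t\, \|\chi_{E_t}\|_{\Phi, B(a,r)},
\qquad
\|f\|_{\wcM_\Phi^\vp} = \sup_{t > 0} t\, \|\chi_{E_t}\|_{\cM_\Phi^\vp}.
\]
The first equality follows because, by \eqref{inverse ineq} and the monotonicity of $\Phi$ and $\Phi^{-1}$, for each fixed $t$ the condition $\Phi(t/\lambda)\, |B(a,r) \cap E_t|/|B(a,r)| \le 1$ is equivalent to $\lambda \ge t\, \|\chi_{E_t}\|_{\Phi, B(a,r)}$; the second then follows by taking the sup against $1/\vp(r)$ over $(a, r)$ and interchanging suprema.

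With these identities in hand, both implications of the theorem are immediate. For necessity, apply the assumed boundedness of $C_\psi$ on $\wcM_\Phi^\vp(\R^n)$ to $f = \chi_A$; since $C_\psi \chi_A = \chi_{\psi^{-1}(A)}$ and weak and strong norms agree on characteristic functions, the estimate \eqref{eq:chi-ineq} drops out. For sufficiency, observe that $\{x : |f(\psi(x))| > t\} = \psi^{-1}(E_t)$, so applying the level-set identity to $C_\psi f$ and then invoking \eqref{eq:chi-ineq} yields
\[
\|C_\psi f\|_{\wcM_\Phi^\vp}
= \sup_{t > 0} t\, \|\chi_{\psi^{-1}(E_t)}\|_{\cM_\Phi^\vp}
\le K \sup_{t > 0} t\, \|\chi_{E_t}\|_{\cM_\Phi^\vp}
= K \|f\|_{\wcM_\Phi^\vp}.
\]

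The main obstacle is the careful justification of the level-set identity. One must convert a single $\lambda$-infimum taken over a supremum in $t$ into a supremum over $t$ of separate $\lambda$-conditions, using the generalized inverse of Definition~\ref{defn:ginverse} rather than a genuine inverse. Because $\Phi$ may vanish on an initial interval and $\Phi^{-1}$ may have jumps, the equivalence $\Phi(u) \le v \Leftrightarrow u \le \Phi^{-1}(v)$ must be applied through the pair of inequalities in \eqref{inverse ineq}, and the degenerate cases $|B(a,r) \cap E_t| = 0$ or $=|B(a,r)|$ should be handled with the natural conventions $1/\Phi^{-1}(\infty) = 0$ and $1/\Phi^{-1}(1) \in (0,\infty)$. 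Once this technical identity is secured, the theorem follows from the two short computations above.
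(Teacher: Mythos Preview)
Your proof is correct and follows essentially the same route as the paper. The paper isolates your level-set identity as its Lemma~\ref{lem:weak-norm} and then appeals to a general black-box (Proposition~\ref{prop:weak-type composition}) about weak-type spaces of quasi-normed spaces, whereas you unpack that proposition into the direct two-line argument; the substantive content is identical.
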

\begin{rem}
If $\wcM_\Phi^\vp(\R^n)=\cM_\Phi^\vp(\R^n)=L^\infty(\R^n)$,
then condition~\eqref{eq:chi-ineq} reduces to the nonsingularity of $\psi$.
Under this condition, $C_\psi$ is bounded on $L^\infty(\R^n)$.
\end{rem}

The organization of this paper is as follows: 
In Section~2, we give the proof of Theorem~\ref{thm:OM-OM-second-sufficiency}.
In Section~3, we prepare the proof of  Theorem~\ref{thm:OM-OM-second-necessary}. In Section~4, we give the proof of Theorem~\ref{thm:OM-OM-second-necessary}. In Section~5, we give the proof of Theorem~\ref{thm:wOM-wOM}.

At the end of this section, we make some conventions. Throughout this paper, we always use $C$ to denote a positive constant.
If $f \le Cg$, we then write $f\ls g$ or $g\gs f$, and if $f \ls g \ls f$, we then write $f \sim g$.

\section{Proof of Theorem~\ref{thm:OM-OM-second-sufficiency}}

Before proving Theorem~\ref{thm:OM-OM-second-sufficiency}, we state the following assertion:
\begin{prop}\rm\cite[Theorem~2.1]{Cui-Hudzik-Kumar-Maligranda2004}\label{Bounded-Modular}
Assume that $\psi: \R^n\to \R^n$ is a measurable nonsingular transformation.
There exists some $K>0$  such that
\begin{equation}\label{eq:modular}
\int_{\R^n} \Phi\left( f(\psi(x)) \right) dx \le K \int_{\R^n} \Phi\left( f(x) \right) dx
\end{equation}
holds for all $f \in L^0(\R^n)$ satisfying $\dint_{\R^n} \Phi\left( f(x) \right) dx < \infty$ 
if and only if
\begin{equation*}
|\psi^{-1}(A)| \le K|A|
\end{equation*}
for all $A \subset \R^n$. 
\end{prop}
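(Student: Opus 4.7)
The plan is to prove the two directions of Proposition~\ref{Bounded-Modular} separately, using pushforward measures for sufficiency and simple test functions for necessity.

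For the sufficiency direction, I would define the pushforward measure $\mu(A) := |\psi^{-1}(A)|$ on the Borel sets of $\R^n$. The nonsingularity of $\psi$ together with the volume condition $|\psi^{-1}(A)| \le K|A|$ say exactly that $\mu$ is absolutely continuous with respect to Lebesgue measure, with Radon-Nikodym derivative bounded by $K$ almost everywhere. The standard change-of-variables formula
\[
\int_{\R^n} h(\psi(x))\, dx = \int_{\R^n} h(y)\, d\mu(y)
\]
holds for every nonnegative measurable $h$ (by monotone approximation from indicator functions). Combining this with the pointwise bound $d\mu/dy \le K$ yields
\[
\int_{\R^n} h(\psi(x))\, dx \le K \int_{\R^n} h(y)\, dy.
\]
Specializing to $h(y) = \Phi(|f(y)|)$, which is nonnegative and measurable since $\Phi$ is monotone and continuous on $[0,\infty)$ by convexity, gives the modular inequality \eqref{eq:modular}.

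For the necessity direction, I would test \eqref{eq:modular} against simple functions of indicator type. Given any measurable $A\subset\R^n$ with $|A|<\infty$, pick $t>0$ with $\Phi(t)>0$; such $t$ exists because $\Phi(0)=0$, $\Phi$ is non-decreasing, and $\Phi(t)\to\infty$. Applying \eqref{eq:modular} to $f := t\chi_A$, which is admissible since $\int_{\R^n}\Phi(f)\,dx = \Phi(t)|A|<\infty$, and noting $f\circ\psi = t\chi_{\psi^{-1}(A)}$, the two sides evaluate to $\Phi(t)|\psi^{-1}(A)|$ and $K\Phi(t)|A|$ respectively. Canceling $\Phi(t)>0$ yields the desired volume bound. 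For $|A|=\infty$, one either observes the estimate is vacuous, or exhausts $A$ by finite-measure subsets $A_n\uparrow A$ and passes to the limit via monotonicity of preimage and Lebesgue measure.

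I do not anticipate a substantial obstacle. The main technical point is the admissibility constraint $\int_{\R^n}\Phi(f)\,dx<\infty$, which is why one tests against finite-measure sets in the necessity argument; the sufficiency direction reduces to a clean Radon-Nikodym computation once the pushforward viewpoint is adopted, so the whole proof is essentially bookkeeping around a change of variables.
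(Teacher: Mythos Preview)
Your proof is correct and is the standard argument for this result. Note, however, that the paper does not actually prove Proposition~\ref{Bounded-Modular}; it simply quotes it as \cite[Theorem~2.1]{Cui-Hudzik-Kumar-Maligranda2004} and uses the implication \eqref{eq:modular} as a black box in the proof of Theorem~\ref{thm:OM-OM-second-sufficiency}. So there is no ``paper's own proof'' to compare against here---your pushforward/Radon--Nikodym argument for sufficiency and indicator-function test for necessity constitute exactly the proof one finds in the cited source.
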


\begin{proof}[ Proof of Theorem~\ref{thm:OM-OM-second-sufficiency}]
Set a ball $B_0 = B(a,r)$ with $a \in \R^n$ and $r>0$.
Let $x, y \in B_0$ be fixed. 
We note that the Lipschitz continuity of $\psi$ implies the validity of the following inequalities.
\begin{align}\label{eq:Lipschitz}
\notag{\rm diam}(\psi(B_0)) &\equiv \sup_{x, y \in B_0}|\psi(x) -\psi(y)| \le L\sup_{x, y \in B_0}|x-y|
\\&\le L\sup_{x, y \in B_0}(|x-a| + |y-a|) \le 2Lr.
\end{align}
Thus, there exists $B_1= B(\psi(a), Lr)$ such that
\[
B_1 \supset \psi(B_0) \quad {\rm and \quad } |B_1| = L^n |B_0|,
\]
because of  the Lipschitz continuity of $\psi$.
Set $M =\max(1,L^n)$ and  let $K\ge 1$ satisfy \eqref{eq:modular}.
By exploiting the convexity of $\Phi$, we can prove the following inequalities
\begin{align*}
\frac1{|B_0| }\int_{B_0} \!\Phi\!\left(\frac{|C_\psi f(x) | }{KM\left\|f\right\|_{\Phi, B_1} }\right)\! dx
&\le
\frac1{|B_0| }\int_{\R^n} \!\Phi\!\left(\frac{|f(\psi(x)) |}{KM\left\|f\right\|_{\Phi, B_1} }\right) \chi_{\psi(B_0)}(\psi(x)) \! dx
\\&=
\frac1{|B_0| }\int_{\R^n} \!\Phi\!\left(\frac{|f(\psi(x))| \chi_{\psi(B_0)}(\psi(x)) } {KM\left\|f\right\|_{\Phi, B_1} }\right) \! dx
\\ &\le
\frac{L^n}{|B_1| }\int_{B_1} \!\Phi\!\left(\frac{|f(x) |}{M\left\|f\right\|_{\Phi, B_1} }\right)\! dx
\le 1,
\end{align*}
where we use Proposition~\ref{Bounded-Modular} for the third inequality. As a result, we have $ \left\| C_\psi f \right\|_{\Phi, B_0} \le KM\left\| f \right\|_{\Phi, B_1}$. Therefore, by the doubling condition of $\vp$, we have
\begin{align*}
\frac1{\vp(r)} \left\|C_\psi f \right\|_{\Phi, B_0}
&\le
\frac{\vp(Lr)}{\vp(r)}\frac1{\vp(Lr)} KM\left\| f \right\|_{\Phi, B_1}
\\&
\le
K\max(1,L^n)\left(\sup_{r>0}\frac{\vp(Lr)}{\vp(r)}\right)\left\| f \right\|_{\cM_\Phi^\vp}
\\&\ls
KC\left\| f \right\|_{\cM_\Phi^\vp},
\end{align*}
where $C$ is a positive constant depending on $L$ and independent of  $K$.
Therefore, we have the desired conclusion.
\end{proof}

\section{Preliminaries of Theorem~\ref{thm:OM-OM-second-necessary}}

In this section, we prepare some lemmas for the proof of Theorem~\ref{thm:OM-OM-second-necessary}. First, we define the notions.  
\begin{itemize}
\item
Let $M_n(\R)$ be the set of all $n\times n$ regular matrices.
\item
The space $\Lic(\R^n)$ stands for the set of all $\Li(\R^n)$ functions with compact support.
\item
The space $\Cic(\R^n)$ is the set of all smooth functions with compact support.
\item
For a differentiable vector-valued function $\psi = (\psi_1, \dots, \psi_n)^T$ on $\R^n$, we denote by $D\psi$
the Jacobian matrix of $\psi$, that is,
\[
D\psi \equiv \left( \frac{\partial\psi_i}{\partial x_j} \right)_{1\le i,j\le n} \equiv (\psi_{i,j})_{1\le i,j\le n}.
\]
\item 
Let $A \in M_n(\R)$. 
Denote $\left\|A\right\|_{\rm Fro}$ by the Frobenius norm of $A$, that is, 
\[
\left\|A\right\|_{\rm Fro} \equiv \left(\sum_{i,j=1}^n a_{ij}^2\right)^{1/2},
\]
where $a_{ij}$ is the $(i,j)$-element in $A$.  
\item
Let $E$ be a normed space.
For a linear operator $T$ from $E$ to itself, we define the operator norm by
\begin{equation}\label{eq:operator_norm}
\left\|T\right\|_{E \to E} \equiv \sup_{f\neq 0}\frac{ \|T f\|_{E} }{ \|f\|_{E}} \equiv \sup_{\|f\|_{E} = 1}\|Tf\|_{E}.
\end{equation}
\end{itemize}

If $\Phi_2(t) = t^p$ and $\vp_2(r) = r^{-n/p}$ with $p\ge 1$ in \cite[Remark~4]{Sawano-Fazio-Hakim2020}, then we have the following assertion. 
\begin{lem}
\label{lem:embedding_in_Lebesgue}
Let $\Phi \in \iPy$, $\vp \in \cGdec_1$, and $p \ge 1$. If $ (\Phi, \vp, p)\in {\mathcal H}_\Phi^{p,\vp}$, then
\[
\|f\|_{\cM_{\Phi}^{\vp}}
\le C \|f\|_{L^p}
\]
for all $f \in L^p(\R^n)$.
\end{lem}

Using an operator norm, we obtain the following lemmas, which are used in the proof of Lemma~\ref{lem:diag}.
\begin{lem}\label{lem:operator-norm}
Let $\psi(x) = cx$ with  $c >0$, $\Phi \in \iPy$ and $\vp\in \cG_0\cap\cGdec_1$. Then
\begin{equation}\label{eq:operator-norm}
\inf_{r>0} \frac{\vp\left(cr\right)}{\vp(r)} \le
\left\|C_\psi\right\|_{\cM_\Phi^\vp\to \cM_\Phi^\vp}
\le
\sup_{r>0} \frac{\vp\left(cr\right)}{\vp(r)}.
\end{equation}
Moreover, if $\vp \in \cGdec_2$, then
\begin{equation}\label{eq:operator-norm2}
\left\|C_\psi\right\|_{\cM_\Phi^\vp\to \cM_\Phi^\vp} \sim \vp(c).
\end{equation}
\end{lem}

\begin{proof}
We show only the left inequality in \eqref{eq:operator-norm} because the right inequality is proved similarly, and \eqref{eq:operator-norm2} is obtained immediately from \eqref{eq:operator-norm}.
Let $\Phi \in \iPy$, $a \in \R^n$, $c \in \R\setminus\{0\}$, and $r>0$. Put $B_{(c)} \equiv B(ca, cr)$. Then, we compute
\begin{align*}
\frac1{|B_{(c)} |} \int_{B_{(c)} } \Phi\left(\frac{|f(x)|}{\left\|C_\psi f\right\|_{\Phi, B_{(1)} }}\right) dx
&=
\frac1{|B_{(c)} |} \int_{B_{(1)} } \Phi\left(\frac{|f(cy)|}{\left\|C_\psi f\right\|_{\Phi, B_{(1)} }}\right) c^ndy
\\&=\frac1{|B_{(1)} |} \int_{B_{(1)} } \Phi\left(\frac{|f(cy)|}{\left\|C_\psi f\right\|_{\Phi, B_{(1)} }}\right) dy \le 1.
\end{align*}
As a result, we have
\begin{align*}
\frac1{\vp(cr)}\left\| f\right\|_{\Phi, B_{(c)} } &\le
\frac{\vp(r)}{\vp(cr)}\frac1{\vp(r)}\left\| C_\psi f\right\|_{\Phi, B_{(1)} } \le \sup_{r>0}\frac{\vp(r)}{\vp(cr)} \left\| C_\psi f\right\|_{\cM_\Phi^\vp}
\end{align*}
and
obtain
\[
\inf_{r>0}\frac{\vp(cr)}{\vp(r)}
\left\| f\right\|_{\cM_\Phi^\vp} \le \left\| C_\psi f\right\|_{\cM_\Phi^\vp}.
\]

Furthermore, by the submultiplicativity of the function $\vp$, we have
\begin{equation}\label{eq:left(1.3)}
\frac{\vp(cr)}{\vp(r)} \le C_4\vp(c).
\end{equation}
On the other hand, since $\vp \in \cGdec_2$, we have
\begin{equation}\label{eq:right(1.3)}
\frac{\vp(cr)}{\vp(r)} \ge C_5^{-1} \vp(cr)\vp\left(\frac1r\right) \ge C_4^{-1}C_5^{-1} \vp(c).
\end{equation}
Combining \eqref{eq:left(1.3)} and \eqref{eq:right(1.3)} with  \eqref{eq:operator-norm}, we obtain \eqref{eq:operator-norm2}.

Therefore, we have the desired conclusion.
\end{proof}


We consider the operator norm of the composition operator  defined by an orthogonal matrix.
\begin{lem}\label{lem:Orth-op}
Let $\Phi \in \iPy$ and $\vp \in \cGdec_1$. If $W \in M_n(\R)$ is an orthogonal matrix, then
\[
\left\|C_W\right\|_{\cM_\Phi^\vp\to \cM_\Phi^\vp} = 1.
\]
\end{lem}
\begin{proof}
We show only
$
\left\|C_W f\right\|_{\cM_\Phi^\vp} \le\left\| f\right\|_{\cM_\Phi^\vp},
$
 because
$\left\|C_W f\right\|_{\cM_\Phi^\vp} \ge\left\| f\right\|_{\cM_\Phi^\vp}$
is similarly proved.
Let $a \in \R^n$ and $r>0$. Put $B_0 = B(a,r)$ and $B_W = B(Wa, r)$. We estimate
\begin{align*}
\frac1{|B_0|} \int_{B_0} \Phi\left(\frac{|C_W f(x)|}{\left\|f\right\|_{\Phi, B_W}}\right) dx
&=
\frac1{|B_0|} \int_{B_W} \Phi\left(\frac{|f(y)|}{\left\|f\right\|_{\Phi, B_W}}\right) |{\rm det}W^{-1}|dy
\\&= \frac1{|B_W|} \int_{B_W} \Phi\left(\frac{|f(y)|}{\left\|f\right\|_{\Phi, B_W}}\right) dy
\le 1,
\end{align*}
where we use that  if  $x \in B_0$ then $Wx \in B_W$ for the first equality. As a result, we have
$
\left\| C_Wf\right\|_{\Phi, B_0} \le \left\| f\right\|_{\Phi, B_W}. 
$
Therefore, we have the desired conclusion.
\end{proof}



We estimate the norm inequality for a function scaled by the Jacobian matrix.
\begin{lem}\label{lem:Jacobi-bdd2}
Let $\Phi \in \iPy$, $\vp \in \cGdec_2$, and $p \ge 1$. Suppose that a diffeomorphism $\psi:\R^n\to \R^n$ induces a bounded operator $C_\psi$ from $\cM_\Phi^\vp(\R^n)$ to itself. If $ (\Phi, \vp, p)\in {\mathcal H}_\Phi^{p,\vp}$, then the following inequality holds{\rm :}
\[
\left\|f(D\psi(x_0)\cdot)\right\|_{\cM_\Phi^\vp} \ls \left\|f\right\|_{\cM_\Phi^\vp}
\]
for all $x_0 \in \R^n$ and $f \in \cM_\Phi^\vp(\R^n)$.
\end{lem}

\begin{proof}
We divide the proof into the following three cases.

{\bf Case~(i) $(f \in \Cic(\R^n))$}
Let $a \in \R^n$ and $r, t>0$. Denote
\[
B_0 = B(a,r), \quad B_1 = B(x_0+at, tr), \quad \text{and} \quad B_2 = B\left(a +\frac{x_0 -\psi(x_0)}t,r\right).
\]
Notice that
\[
t^n|B_0 |= |B_1| = t^n|B_2| = v_nt^nr^n.
\]
Let $K=\|C_\psi\|_{\cM_\Phi^\vp\to\cM_\Phi^\vp}$.
Putting
\[
g(\cdot) = f\left(\frac{\psi(\cdot) -\psi(x_0)}t\right),
\]
we estimate
\begin{align*}
&
\frac1{ |B_0| }\int_{B_0} \!\Phi\left(
\frac{1}{\left\|g\right\|_{\Phi,B_1 }}
\left|
g\!\left(x_0+ tx\right)
\right|\right)\! dx
=
\frac{t^n}{ |B_1| }\int_{B_1} \!\Phi\left(\frac{1}{\left\|g\right\|_{\Phi, B_1}}\left|g\!\left(y\right)\right|\right)\! t^{-n}dy \le 1,
\end{align*}
where $y= x_0 + tx$. Thus,
\begin{align}
\frac1{\vp(r)}
\left\|
g\!\left(x_0+ t\cdot\right)
\right\|_{\Phi, B_0}
\ls
\frac{\vp(t)}{\vp(tr)}
\left\|
g\!\left(\cdot\right)
\right\|_{\Phi, B_1}
\notag&\le
\vp(t)
\left\|g\!\left(\cdot\right)\right\|_{\cM_\Phi^\vp}
\notag\\ &\le
K\vp(t)
\left\|f\left(
\frac{\cdot -\psi(x_0)}t
\right)\right\|_{\cM_\Phi^\vp}, \label{eq:bdd-1/2}
\end{align}
where we use the submultiplicativity of $\vp$ for the first inequality
and the boundedness of $C_\psi$ on $\cM_\Phi^\vp(\R^n)$ for the third inequality. \eqref{eq:bdd-1/2} implies that
\begin{equation}\label{eq:bdd-MM1/2}
\left\|f\left(
\frac{\psi(x_0+t\cdot) -\psi(x_0)}t
\right)\right\|_{\cM_\Phi^\vp}
\le
K\vp(t)
\left\|f\left(
\frac{\cdot -\psi(x_0)}t
\right)\right\|_{\cM_\Phi^\vp}.
\end{equation}
Meanwhile,
\begin{align*}
\frac{1}{ |B_1| }\int_{B_1} \!\Phi\left(\frac{1}{\left\|f\right\|_{\Phi, B_2}} \left|f\!\left(\frac{x -\psi(x_0)}{t}\right)\right|\right)\! dx
=
\frac{t^{-n}}{ |B_2| }\int_{B_2} \!\Phi\left(\frac{1}{\left\|f\right\|_{\Phi, B_2}} |f(z)|\right)\! t^ndz
\le 1,
\end{align*}
where $z = (x-\psi(x_0))/t$. As a result, we have
\begin{equation}\label{eq:norm_estimate}
\left\|f\left(\frac{\cdot -\psi(x_0)}{t}\right)\right\|_{\Phi, B_1}\le \left\|f\right\|_{\Phi, B_2}.
\end{equation}
Thus, we estimate
\begin{align*}
\frac1{\vp(tr)}\left\|f\!\left(\frac{\cdot -\psi(x_0)}{t}\right)\right\|_{\Phi, B_1}
&\le
C_4\frac{\vp(1/t)}{\vp(r)}
\left\|f\right\|_{\Phi, B_2}
\\&\le C_4C_5
\frac1{\vp(t)}
\frac{1}{\vp(r)}
\left\|f\right\|_{\Phi, B_2}
\\&\le C_4C_5
\frac1{\vp(t)}
\left\|f\right\|_{\cM_\Phi^\vp},
\end{align*}
where we use the submultiplicativity of the function $\vp$ and \eqref{eq:norm_estimate} for the first inequality, and $\vp(1/r) \ls 1/\vp(r)$ for the second inequality.
Hence,
\begin{equation}\label{eq:bdd-MM2/2}
\left\|f\!\left(\frac{\cdot -\psi(x_0)}{t}\right)\right\|_{\cM_\Phi^\vp}
\ls
\frac1{\vp(t)}
\left\|f\right\|_{\cM_\Phi^\vp}.
\end{equation}
Combining \eqref{eq:bdd-MM1/2} and \eqref{eq:bdd-MM2/2}, we have
\begin{align*}
\left\|f\left(
\frac{\psi(x_0+t\cdot) -\psi(x_0)}t
\right)\right\|_{\cM_\Phi^\vp}
\ls
K\left\|f\right\|_{\cM_\Phi^\vp}.
\end{align*}
Here, fix $\{t_n\}_{n=1}^\infty$ such that $t_n \to 0$ as $n\to \infty$. For $x \in B_0$,
\[
\left|
f\!\left(\frac{\psi(x_0+ t_nx) -\psi(x_0)}{t_n}\right)\right| \to \left|f(D\psi(x_0)x) \right|
\]
and, for $t>0$,
\[
m\left(f\!\left(\frac{\psi(x_0+ t_nx) -\psi(x_0)}{t_n}\right), t\right)\to m\left(f(D\psi(x_0)x), t\right)
\]
as $n \to \infty$.
Therefore,
\begin{align*}
&
\liminf_{n\to \infty} \int_{B_0} \Phi\left(\left|f\!\left(\frac{\psi(x_0+ t_nx) -\psi(x_0)}{t_n}\right)\right|\right) dx
\\&\ge
\int_{B_0}\liminf_{n\to \infty} \Phi\left(\left|f\!\left(\frac{\psi(x_0+ t_nx) -\psi(x_0)}{t_n}\right)\right|\right)dx
\\&\ge
\int_{B_0} \Phi\left(|f(D\psi(x_0)x)|\right) dx.
\end{align*}
By letting $t_n\to 0$, we obtain the desired result, that is
\[
\left\|f(D\psi(x_0)\cdot)\right\|_{\cM_\Phi^\vp} \ls \left\|f\right\|_{\cM_\Phi^\vp}.
\]
{\bf Case~(ii) $(f \in \Lic(\R^n))$}
For any $p \in [1, \infty)$, we can choose a sequence $\{f_j\}_{j=1}^\infty \subset \Cic(\R^n)$ such that $f_j$ converges to $f$ in $L^p(\R^n)$ as $j \to \infty$. By passing to
a subsequence, we may assume that $f_j$ converges to $f$, almost everywhere in $\R^n$ as $j\to \infty$.
Thus, by the Fatou lemma, the inequality
\[
\left\|f(D\psi(x_0)\cdot)\right\|_{\cM_\Phi^\vp} \le \liminf_{j\to \infty} \left\|f_j(D\psi(x_0)\cdot)\right\|_{\cM_\Phi^\vp}
\]
holds since
\[
\left\|f(D\psi(x_0)\cdot)\right\|_{\Phi, B_0} \le \liminf_{j\to \infty} \left\|f_j(D\psi(x_0)\cdot)\right\|_{\Phi, B_0}.
\]
As we have proved the assertion for $f_j$, we have
\[
\left\|f_j(D\psi(x_0)\cdot)\right\|_{\cM_\Phi^\vp} \ls 2KC\left\|f_j\right\|_{\cM_\Phi^\vp}.
\]
Because $L^p(\R^n)$ is embedded into $\cM_\Phi^\vp(\R^n)$, see Lemma~\ref{lem:embedding_in_Lebesgue}, $f_j$ converges to $f$ in $\cM_\Phi^\vp(\R^n)$ as $j \to \infty$. Consequently,
\[
\liminf_{j\to \infty} \left\|f_j\right\|_{\cM_\Phi^\vp} = \left\|f\right\|_{\cM_\Phi^\vp}.
\]
By combining these observations, the following estimate holds:
\[
\left\|f(D\psi(x_0)\cdot)\right\|_{\cM_\Phi^\vp} \ls K\left\|f\right\|_{\cM_\Phi^\vp}.
\]\\
{\bf Case~(iii) ($f \in \cM_\Phi^\vp(\R^n)$)}
For $k\in \N$, we define an element $f_k \in \Lic(\R^n)$ by
\[
f_k(x) = f(x) \chi_{[-k,k]^n}(x)\chi_{[0,k]}(|f(x)|) \quad \text{for all} \quad x \in \R^n.
\]
Then, we have
\[
\left\|f_k(D\psi(x_0)\cdot)\right\|_{\cM_\Phi^\vp} \ls K\left\|f_k\right\|_{\cM_\Phi^\vp} \le K\left\|f\right\|_{\cM_\Phi^\vp},
\]
by the previous paragraph. By using the Fatou lemma again, we obtain
\[
\left\|f(D\psi(x_0)\cdot)\right\|_{\cM_\Phi^\vp} \ls K\left\|f\right\|_{\cM_\Phi^\vp}
\]
as required.
\end{proof}

%
%


Let $\psi:\R^n\to\R^n$ be a diffeomorphism and $D\psi:\R^n\to M_n(\R)$ be its Jacobian matrix. For
$x_0 \in \R^n$, the Jacobian matrix $D\psi(x_0)$
can be decomposed by the singular value decomposition
as
\begin{equation*}
D\psi(x_0) = U\Sigma V,
\end{equation*} 
where 
\begin{equation}\label{eq:diag_a}
\Sigma= \Sigma(x_0) = {\rm diag}(\alpha_1(x_0), \dots, \alpha_n(x_0))
\end{equation} 
is a diagonal matrix with positive diagonal entries
satisfying $\alpha_1(x_0) \le\cdots \le\alpha_n(x_0)$, and $U=U(x_0)$ and $V = V(x_0)$ are orthogonal
matrices. 


Using Lemma 2.2 in \cite{Hatano-Ikeda-Ishikawa-Sawano2021}, 
we prove the following lemma.
\begin{lem}\label{lem:diag}
Let $a_1, \dots, a_n$ be a positive numbers i.e.,
\[
a_i > 0 \quad \text{for}\quad i=1, \dots, n.
\]
Set $D \equiv {\rm diag}(a_1,\dots,a_n)$. If $\Phi \in \iPy$ and $\vp \in \cGdec_2$, then the following estimate holds:
\[
\vp\left( \prod_{k=1}^n a_k\right)^{1/n}
\ls
\left\|C_D\right\|_{\cM_\Phi^\vp\to \cM_\Phi^\vp}.
\]
\end{lem}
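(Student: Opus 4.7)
The plan is to reduce the estimate for a general diagonal $D$ to the scalar dilation case $cI$ with $c=\prod_{k=1}^n a_k$, which is already controlled by Corollary~\ref{cor:operator-norm}. The key observation is that $cI$ can be exhibited as a product of $n$ orthogonal conjugates of $D$. Let $P$ be the permutation matrix of a cyclic permutation $\sigma$ of order $n$ on $\{1,\dots,n\}$, and set $D_j \equiv P^j D P^{-j}$ for $j=0,\dots,n-1$. Each $D_j$ is diagonal, obtained from $D$ by cyclically shifting its diagonal entries; since diagonal matrices commute and each $a_k$ occupies the $i$-th diagonal slot exactly once as $j$ ranges over $\{0,\dots,n-1\}$, one verifies
\[
D_0 D_1 \cdots D_{n-1} = cI.
\]

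Next, using the composition identity $C_{AB}=C_B C_A$, I would write
\[
C_{cI} \;=\; C_{D_0 D_1 \cdots D_{n-1}} \;=\; C_{D_{n-1}} \cdots C_{D_1}\, C_{D_0}.
\]
Because $P^{\pm j}$ are orthogonal matrices, Lemma~\ref{lem:Orth-op} gives $\|C_{P^{\pm j}}\|_{\cM_\Phi^\vp\to\cM_\Phi^\vp}=1$. Hence from $C_{D_j}=C_{P^{-j}} C_D C_{P^j}$ I get $\|C_{D_j}\|_{\cM_\Phi^\vp\to\cM_\Phi^\vp}\le \|C_D\|_{\cM_\Phi^\vp\to\cM_\Phi^\vp}$ for every $j$, and submultiplicativity of the operator norm yields
\[
\|C_{cI}\|_{\cM_\Phi^\vp\to\cM_\Phi^\vp} \;\le\; \|C_D\|_{\cM_\Phi^\vp\to\cM_\Phi^\vp}^{\,n}.
\]

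To finish, I would invoke Corollary~\ref{cor:operator-norm}, whose hypotheses are satisfied because $\vp\in\cGdec_2$. Applied to the scalar dilation $\psi(x)=cx$ this yields $\|C_{cI}\|_{\cM_\Phi^\vp\to\cM_\Phi^\vp}\sim \vp(c)$. Combining this with the display above and taking $n$-th roots produces
\[
\vp\!\left(\prod_{k=1}^n a_k\right)^{1/n} \ls \|C_D\|_{\cM_\Phi^\vp\to\cM_\Phi^\vp},
\]
which is the desired bound. The proof is essentially algebraic and involves no further analytic input; the only place requiring mild care is the bookkeeping of the cyclic conjugation so as to confirm that the product of the $D_j$'s is exactly $cI$, together with the order reversal implicit in $C_{AB}=C_B C_A$ when passing from matrix products to composition operators.
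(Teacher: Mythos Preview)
Your proposal is correct and follows essentially the same approach as the paper: both arguments conjugate $D$ by powers of a cyclic permutation matrix, observe that the product of the $n$ conjugates is $\bigl(\prod_k a_k\bigr)I$, use Lemma~\ref{lem:Orth-op} and submultiplicativity of the operator norm to bound $\|C_{cI}\|\le\|C_D\|^n$, and then invoke Corollary~\ref{cor:operator-norm}. You are in fact slightly more careful than the paper about the order reversal $C_{AB}=C_B C_A$, though this has no effect on the norm inequality.
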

\begin{proof}
We introduce the matrix $W \in M_n(\R)$ corresponding to the transformation
\[
(x_1, x_2,\dots, x_n) \to (x_2, x_3,\dots, x_n, x_1).
\]
Then, for any $k \in [1, \dots, n]$, we observe that
\[
W^{-k}DW^k = {\rm diag}(a_{n-k+1}, a_{n-k+2}, \dots, a_n, a_1, a_2, \dots, a_{n-k})
\]
holds, since $W^k$ maps the $l$-th elementary vector $e_l$ to $e_{l-k}$ if $l >k$ and $e_{l-k+n}$ otherwise.
Then
\[
\left\|C_{W^{-k}DW^k}\right\|_{\cM_\Phi^\vp\to\cM_\Phi^\vp}
\le
\left\|C_D\right\|_{\cM_\Phi^\vp\to\cM_\Phi^\vp}.
\]
On the other hand
\[
\left\|C_D\right\|_{\cM_\Phi^\vp\to\cM_\Phi^\vp}
\le
\left\|C_{W^{-k}DW^k}\right\|_{\cM_\Phi^\vp\to\cM_\Phi^\vp}
\]
holds. The identity
\[
\prod_{k=1}^nW^{-k}DW^k = \left(\prod_{k=1}^n a_k\right)E
\]
holds. By \eqref{eq:operator-norm2}, we have
\[
\left\|C_{\prod_{k=1}^nW^{-k}DW^k}\right\|_{\cM_\Phi^\vp\to \cM_\Phi^\vp} \sim \vp\left( \prod_{k=1}^n a_k\right).
\]
Combining this with the identity $C_{\prod_{k=1}^nW^{-k}DW^k} = \prod_{k=1}^n C_{W^{-k}DW^k}$,
\begin{align*}
\left\|C_{\prod_{k=1}^nW^{-k}DW^k}\right\|_{\cM_\Phi^\vp\to\cM_\Phi^\vp}
&=
\left\| \prod_{k=1}^n C_{W^{-k}DW^k}\right\|_{\cM_\Phi^\vp\to\cM_\Phi^\vp}
\\&\ls
\prod_{k=1}^n\left\|C_D\right\|_{\cM_\Phi^\vp\to\cM_\Phi^\vp}\le \left\|C_D\right\|_{\cM_\Phi^\vp\to\cM_\Phi^\vp}^n,
\end{align*}
where we use Lemma~\ref{lem:Orth-op} for the second inequality, 
which yields
\[
\left\|C_{W^k}\right\|_{\cM_\Phi^\vp\to\cM_\phi^\vp} = \left\|C_W\right\|_{\cM_\Phi^\vp\to\cM_\phi^\vp}^k =1 
\]
and
\[
\left\|C_{W^{-k}}\right\|_{\cM_\Phi^\vp\to\cM_\phi^\vp} = \left\|C_{W^{-1}}\right\|_{\cM_\Phi^\vp\to\cM_\phi^\vp}^k =1 
\]
for $k\in [1, n]\cap \N$.
Therefore,
\[
\vp\left( \prod_{k=1}^n a_k\right)^{1/n}
\ls
\left\|C_D\right\|_{\cM_\Phi^\vp\to \cM_\Phi^\vp}
\] holds. The conclusion of
this lemma is proved.
\end{proof}

\begin{lem}\label{lem:equiv-op}
Let $\Phi \in \iPy$ and $\vp \in \cGdec_1$. Let $A \in M_n(\R)$, and let
$\alpha_1,\dots,\alpha_n$ be the singular values of $A$.
Set $\Sigma = \mathrm{diag}(\alpha_1,\dots,\alpha_n)$.
Then
\[ 
\left\|C_\Sigma\right\|_{\cM_\Phi^\vp\to \cM_\Phi^\vp} = \left\|C_A\right\|_{\cM_\Phi^\vp\to \cM_\Phi^\vp}.
\]
\end{lem}

\begin{proof}
Let  $A \in M_n(\R)$ and write its singular value decomposition
$A = U \Sigma V$, where $U$ and $V$ are orthogonal matrices and
$\Sigma = \mathrm{diag}(\alpha_1,\dots,\alpha_n)$. By using Lemma~\ref{lem:Orth-op}, we estimate
\[
\left\|C_A\right\|_{\cM_\Phi^\vp\to \cM_\Phi^\vp}
=
\left\|C_{U\Sigma V}\right\|_{\cM_\Phi^\vp\to \cM_\Phi^\vp} \le \left\|C_{\Sigma}\right\|_{\cM_\Phi^\vp\to \cM_\Phi^\vp}
\]
and
\begin{align*}
\left\|C_{\Sigma}\right\|_{\cM_\Phi^\vp\to \cM_\Phi^\vp} 
&\le \left\|C_{U\Sigma V}\right\|_{\cM_\Phi^\vp\to \cM_\Phi^\vp} =\left\|C_A\right\|_{\cM_\Phi^\vp\to \cM_\Phi^\vp}.
\end{align*}
Therefore, we have the conclusion.
\end{proof}
Let $x_0 \in \mathbb{R}^n$.
Applying Lemma~\ref{lem:equiv-op} to $A = D\psi(x_0)$, and denoting by $\Sigma(x_0)$ the diagonal matrix of singular values of
$D\psi(x_0)$, we obtain
\begin{equation}\label{eq:equiv-op}
\| C_{\Sigma(x_0)} \|_{\cM_\Phi^\vp \to \cM_\Phi^\vp}
=
\| C_{D\psi(x_0)} \|_{\cM_\Phi^\vp \to \cM_\Phi^\vp}.
\end{equation}

\begin{prop}\label{prop:vp-const}
Let $\Phi \in \iPy$, $\vp \in \cGdec_2$, $p\ge 1$, $x_0\in \R^n$, and $\psi:\R^n \to \R^n$ be a diffeomorphism. Assume that $ (\Phi, \vp, p)\in {\mathcal H}_\Phi^{p,\vp}$.
If $C_\psi$ and $C_{\psi^{-1}}$ induced by $\psi$ and $\psi^{-1}$, respectively, are bounded on the Orlicz--Morrey space $\cM_\Phi^\vp(\R^n)$,
then we have
\begin{equation}\label{eq:condition-2}
\vp\left(\prod_{k=1}^n \alpha_k(x_0) \right) \sim1,
\end{equation}
where $\alpha_1(x_0), \dots, \alpha_n(x_0)$ are the singular values of $D\psi(x_0)$.
\end{prop}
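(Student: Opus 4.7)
The plan is to obtain the two-sided estimate $\vp\!\left(\prod_{k=1}^n\alpha_k(x_0)\right)\sim 1$ by applying Lemma~\ref{lem:diag} separately to the singular-value parts of $D\psi(x_0)$ and of $D\psi^{-1}(\psi(x_0))$, and then closing the lower half of the estimate using the submultiplicativity of $\vp$ that is built into the class $\cGdec_2\subset\cGdec_1$.

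For the upper bound I would apply the lemma immediately preceding this proposition: using Proposition~\ref{prop:Jacobi-bdd2}, the SVD $D\psi(x_0)=U\Sigma(x_0)V$, and the orthogonal invariance from Lemma~\ref{lem:Orth-op}, it gives that $\|C_{\Sigma(x_0)}\|_{\cM_\Phi^\vp\to\cM_\Phi^\vp}$ is bounded uniformly in $x_0$ by a constant depending only on $\|C_\psi\|_{\cM_\Phi^\vp\to\cM_\Phi^\vp}$. Applying Lemma~\ref{lem:diag} to the positive diagonal matrix $\Sigma(x_0)={\rm diag}(\alpha_1(x_0),\dots,\alpha_n(x_0))$ then yields
\[
\vp\!\left(\prod_{k=1}^n \alpha_k(x_0)\right)^{1/n} \ls \|C_{\Sigma(x_0)}\|_{\cM_\Phi^\vp\to\cM_\Phi^\vp} \ls 1,
\]
which is one half of the claim.

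For the lower bound I would run exactly the same argument for $\psi^{-1}$. Since $C_{\psi^{-1}}$ is assumed bounded on $\cM_\Phi^\vp(\R^n)$ and the hypotheses on $(\Phi,\vp,p)$ are symmetric in $\psi$ and $\psi^{-1}$, Proposition~\ref{prop:Jacobi-bdd2} and the preceding lemma apply to $\psi^{-1}$ as well. The Jacobi matrix $D\psi^{-1}(\psi(x_0))=(D\psi(x_0))^{-1}$ has singular values $1/\alpha_1(x_0),\dots,1/\alpha_n(x_0)$, so the same chain delivers $\vp\!\left(\prod_{k=1}^n 1/\alpha_k(x_0)\right) \ls 1$. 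Finally, applying the submultiplicativity of $\vp$ to the identity $1 = \bigl(\prod_k\alpha_k(x_0)\bigr)\cdot\bigl(\prod_k 1/\alpha_k(x_0)\bigr)$ gives
\[
\vp(1) \le C_2\,\vp\!\left(\prod_{k=1}^n \alpha_k(x_0)\right)\vp\!\left(\prod_{k=1}^n \frac{1}{\alpha_k(x_0)}\right),
\]
and combining this with the upper bound on the second factor yields $\vp\!\left(\prod_k\alpha_k(x_0)\right)\gs \vp(1)$.

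The argument is essentially assembled from tools already in place, so the main care point is simply the bookkeeping: one must verify that the constant produced by the preceding lemma depends only on $\|C_\psi\|_{\cM_\Phi^\vp\to\cM_\Phi^\vp}$ (and hence works uniformly in $x_0$), and then check that the analogous uniformity holds for $\psi^{-1}$. Once these two uniform bounds are in hand, the single use of submultiplicativity closes the estimate and no further technical obstacle remains.
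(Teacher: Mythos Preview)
Your proposal is correct and follows essentially the same route as the paper: apply Proposition~\ref{prop:Jacobi-bdd2} together with the SVD, orthogonal invariance (Lemma~\ref{lem:Orth-op}), and Lemma~\ref{lem:diag} to bound $\vp\bigl(\prod_k\alpha_k(x_0)\bigr)$ from above, then repeat the argument for $\psi^{-1}$ to control $\vp\bigl(\prod_k\alpha_k(x_0)^{-1}\bigr)$ and close via submultiplicativity. The only cosmetic difference is that you evaluate $D\psi^{-1}$ at $\psi(x_0)$ to obtain the reciprocal singular values directly, whereas the paper evaluates at $x_0$ and uses that $\psi^{-1}(x_0)$ ranges over all of $\R^n$; both are equivalent.
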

\begin{proof}
Let $x_0 \in \R^n$ and $U, U', V, V'$ be orthogonal matrices. Put $D\psi(x_0) = U \Sigma(x_0) V$.
By Lemma~\ref{lem:Jacobi-bdd2}, Lemma~\ref{lem:diag} and \eqref{eq:equiv-op}, we have
\begin{align*}
\vp\left(\prod_{k=1}^n \alpha_k(x_0) \right)^{1/n}
&\le
\left\|C_{\Sigma(x_0)}\right\|_{\cM_\Phi^\vp\to \cM_\Phi^\vp}
\sim
\left\|C_{D\psi(x_0)}\right\|_{\cM_\Phi^\vp\to \cM_\Phi^\vp}\le KC.
\end{align*}

Meanwhile, put $D\psi^{-1}(x_0) = U' \Sigma^{-1}(\psi^{-1}(x_0)) V'$. By Lemma~\ref{lem:Jacobi-bdd2}, Lemma~\ref{lem:diag} and \eqref{eq:equiv-op} applied to $\psi^{-1}$, we have
\begin{align*}
\vp\left(\prod_{k=1}^n \alpha_k(\psi^{-1}(x_0))^{-1} \right)^{1/n}
&\le
\left\|C_{\Sigma^{-1}(\psi^{-1}(x_0)) }\right\|_{\cM_\Phi^\vp\to \cM_\Phi^\vp}
\\&\sim
\left\|C_{D\psi^{-1}(x_0)}\right\|_{\cM_\Phi^\vp\to \cM_\Phi^\vp}\le KC.
\end{align*}
Note that singular values of $D\psi^{-1}(x_0)$ are
\[\alpha_1(\psi^{-1}(x_0))^{-1}, \dots, \alpha_n(\psi^{-1}(x_0))^{-1}\]
since $D\psi^{-1}(x_0) = [D\psi(\psi^{-1}(x_0))]^{-1} $.
As a result,
\[
\frac1{KC} \ls \vp\left(\prod_{k=1}^n \alpha_k(\psi^{-1}(x_0)) \right)^{1/n}
\]
holds. Consequently, we have \eqref{eq:condition-2}.
\end{proof}
Further, by the assumption of $\vp \in \cGdec_1$, a condition on $\vp$ that implies \eqref{eq:condition-2} can be stated clearly as follows:
\begin{rem}\label{rem:vp-const}
Let $\vp \in \cGdec_1$.
If $\vp(\alpha_1(x_0)) \ls 1$, then the upper estimate 
\[
\vp\left(\prod_{k=1}^n \alpha_k(x_0) \right) \ls 1
\]
holds. Combined with the lower estimate obtained in Proposition~\ref{prop:vp-const}, this yields \eqref{eq:condition-2}.
\end{rem}

\begin{rem}{\cite[Proposition~2.6]{Hatano-Ikeda-Ishikawa-Sawano2021}}\label{rem:Lipschitz}
Let $\vp \in \cGdec_1$ and $p \ge 1$. Assume that $r^{-n/p} \ls \vp(r)$ for $r>0$.
If $\vp(\alpha_1(x_0)) \ls 1$, then the inverse function $\psi^{-1}$ is Lipschitz.
Indeed, let $x,\tilde{x}\in\mathbb R^n$. Then
\begin{align*}
\left| \psi^{-1}(x)-\psi^{-1}(\tilde{x}) \right|
&\le
\sup_{z\in [x,\tilde{x}]}
\left\|D\psi^{-1}(z)\right\|_{\rm Fro}
|x-\tilde{x}|
\\
&\le
\sup_{z\in [x,\tilde{x}]}
\left(
\sum_{i=1}^n
\frac1{\alpha_i(\psi^{-1}(z))^2}
\right)^{1/2}
|x-\tilde{x}|
\\
&\ls
\sup_{z\in [x,\tilde{x}]}
\left(
n\vp(\alpha_1(\psi^{-1}(z)))^{2p/n}
\right)^{1/2}
|x-\tilde{x}|
\ls
|x-\tilde{x}|,
\end{align*}
where we use the singular value decomposition for the second inequality.

\end{rem}

To prove Theorem~\ref{thm:OM-OM-second-necessary},  
we compute the Orlicz--Morrey norm of  $\chi_{\prod_{j=0}^{k-1}[0,a_j]\times\R^{n-k}}$ for $a_j>0$ and $k \in [1,n]\cap \N$,  whose definition is given by Definition~\ref{defn:OrliczMorrey}. 
However, it is hard to compute the integral of the characteristic function on the ball. 
Then we introduce another definition of the Orlicz--Morrey norm using cubes. As we will show, these definitions are equivalent.
Let $b = (b_1, \dots, b_n)\in \R^n$ and $r>0$. Then 
\[
Q(b, r) \equiv\left\{ x = (x_1, \dots, x_n)\in \R^n: \max_{i=1,\dots,n} |x_i-b_i| \le r \right\}.
\]
We denote 
by 
$\ell(Q(b,r))$ the side-length of $Q(b,r)$. 

\begin{defn}
For $\Phi \in \iPy$,
and a cube $Q(b,r)$ with $b \in \R^n$ and $r>0$, let
\begin{align*}
\|f\|_{\Phi,Q(b,r)}
&\equiv
\inf\left\{ \lambda>0:
\frac{1}{|Q(b,r)|}
\int_{Q(b,r)} \!\Phi\!\left(\frac{|f(x)|}{\lambda}\right) dx \le 1
\right\}.
\end{align*}
For $\vp \in \cGdec_1$, let ${\cM}_{\Phi, {\rm cube}}^\vp(\R^n)$ be the set of all functions $f \in L^0(\R^n)$ such that the
following functional is finite:
\begin{align*}
\|f\|_{{\cM}_{\Phi, {\rm cube}}^\vp}
&\equiv
\sup_{b \in \R^n,\, r>0} \frac1{\vp(\ell(Q(b,r)))} \|f\|_{\Phi,Q(b,r)}.
\end{align*}
\end{defn}
There are equivalent norms when we replace cubes with balls.

We calculate ${\cM}_{\Phi, {\rm cube}}^\vp(\R^n)$-norm of  $\chi_{\prod_{j=0}^{k-1}[0,a_j]\times\R^{n-k}}$ below. 
\begin{lem}\label{lem:chi-est}
Let $\Phi \in \iPy$, $\vp \in \cGdec_1$, and $k \in [1,n]\cap\N$. Then,
\begin{equation}\label{eq:chi-est}
\|\chi_{\prod_{j=0}^{k-1}[0,a_j]\times\R^{n-k}}\|_{{\cM}_{\Phi, {\rm cube}}^\vp} =\sup_{R>0}\frac1{\vp(R)} \Phi^{-1}\left(\frac{R^k}{\prod_{j=0}^{k-1}\min(a_j,R) }\right)^{-1}.
\end{equation}
\end{lem}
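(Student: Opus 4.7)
The plan is to exploit the cube-based characterisation of the Orlicz--Morrey norm recorded in the remark immediately preceding the lemma, compute $\|\chi_E\|_{\Phi,Q}$ in closed form on each cube for $E \equiv \prod_{j=0}^{k-1}[0,a_j]\times\R^{n-k}$, optimise first over the centre of $Q$ for fixed side length $R = 2r$, and finally take the supremum over $R>0$.

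First I would fix a cube $Q = Q(c,r)$ and use that $\chi_E$ is $\{0,1\}$-valued to reduce the modular to
\[
\frac{1}{|Q|}\int_Q \Phi\!\left(\frac{\chi_E(x)}{\lambda}\right)dx = \Phi(1/\lambda)\,\frac{|E\cap Q|}{|Q|}.
\]
The infimum defining $\|\chi_E\|_{\Phi,Q}$ then becomes the infimum over $\lambda>0$ with $\Phi(1/\lambda)\le |Q|/|E\cap Q|$; the generalised inverse relation \eqref{inverse ineq} together with the monotonicity of $\Phi$ gives
\[
\|\chi_E\|_{\Phi,Q} = \frac{1}{\Phi^{-1}\!\left(|Q|/|E\cap Q|\right)}.
\]

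Next I would optimise the position $c$ for a fixed side length $R$. Since $\Phi^{-1}$ is non-decreasing and $\vp(R)$ is independent of $c$, maximising $\|\chi_E\|_{\Phi,Q(c,r)}/\vp(R)$ over $c$ is the same as maximising $|E\cap Q(c,r)|$. The set $E\cap Q(c,r)$ decomposes as a product of one-dimensional intersections: in each of the first $k$ coordinates the overlap of an interval of length $R$ with $[0,a_j]$ has length at most $\min(R,a_j)$, attained either by centring the cube inside $[0,a_j]$ when $R\le a_j$ or by centring $[0,a_j]$ inside the cube when $R>a_j$; in the remaining $n-k$ coordinates the overlap is always $R$. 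Hence $\sup_{c\in\R^n}|E\cap Q(c,r)| = R^{n-k}\prod_{j=0}^{k-1}\min(R,a_j)$, so
\[
\sup_{c\in\R^n}\frac{\|\chi_E\|_{\Phi,Q(c,r)}}{\vp(R)} = \frac{1}{\vp(R)}\,\Phi^{-1}\!\left(\frac{R^k}{\prod_{j=0}^{k-1}\min(R,a_j)}\right)^{-1}.
\]

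Taking the supremum over $R>0$ and invoking the cube-based expression for $\|\cdot\|_{\cM_\Phi^\vp}$ then yields the claimed identity. The main technical point requiring care is the closed form $\|\chi_E\|_{\Phi,Q} = 1/\Phi^{-1}(|Q|/|E\cap Q|)$, which must be verified directly from Definition~\ref{defn:ginverse} including the boundary cases: when $|E\cap Q| = 0$ the argument of $\Phi^{-1}$ is $\infty$, making the norm vanish, consistent with $\chi_E = 0$ a.e.\ on $Q$; and when $E\supseteq Q$ the argument is $1$, giving the Orlicz norm of the constant function $1$ on $Q$. Both degenerate cases are absorbed by the generalised-inverse conventions and the monotonicity properties of $\Phi$.
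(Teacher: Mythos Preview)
Your proposal is correct and follows essentially the same approach as the paper: compute the modular on a cube using the product structure of $E$, recognise that optimising over the cube centre at fixed side length $R$ amounts to maximising each one-dimensional overlap $\left|[b_i-R/2,b_i+R/2]\cap[0,a_i]\right|$ to $\min(a_i,R)$, and then take the supremum over $R$. The paper carries out the same computation coordinate-by-coordinate; your version packages the local Orlicz norm as $1/\Phi^{-1}(|Q|/|E\cap Q|)$ first, which is a slightly cleaner presentation but not a different idea.
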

\begin{proof}
Let $Q(b,R/2)$ with $b =(b_0, \dots, b_{n-1}) \in \R^n$ and $R>0$.
We put
\[
I_{Q(b, R/2)}(\lambda) \equiv
\frac1{|Q(b,R/2)|}
\int_{ x \in Q(b,R/2) } \Phi\left( \frac{\chi_{\prod_{j=0}^{k-1}[0,a_j]\times\R^{n-k} }(x) }\lambda \right)dx
\]
for any $\lambda >0$. Then we have
\begin{align*}
&
I_{Q(b, R/2)}(\lambda)
\\&=
\frac1{|Q(b,R/2)|}
\int_{x_0=b_0-R/2}^{b_0+R/2} \cdots \int_{x_{n-1} = b_{n-1}-R/2}^{b_{n-1}+R/2}
\Phi\left(\frac1\lambda\right)\chi_{[0,a_0]}(x_0)\cdots \chi_{[0,a_{k-1}]}(x_{k-1})
\\&\hspace{7.0cm}\times\chi_{\R^{n-k}}(x_k, \dots,x_{n-1})dx_0\dots dx_{n-1}
\\&=
\frac1{R^k}
\Phi\left(\frac1\lambda\right)
\prod_{i=0}^{k-1}\int_{x_i=b_i-R/2}^{b_i+R/2} \chi_{[0,a_i]}(x_i)dx_i,
\end{align*}
where $x = (x_0, \dots, x_{n-1}) \in \R^n$, and we use $|Q(b,R/2)| = R^n$ and
\[
\int_{x_k=b_k-R/2}^{b_k+R/2}\cdots \int_{x_{n-1} = b_{n-1}-R/2}^{b_{n-1}+R/2}
\chi_{\R^{n-k}}(x_k, \dots,x_{n-1}) dx_k\dots dx_{n-1}= R^{n-k}
\]
for the last equality.
Here we see that
\begin{align*}
\sup_{b_i\in\R}\int_{x_i=b_i-R/2}^{b_i+R/2}\chi_{[0,a_i]}(x_i)dx_i
&=\min(a_i,R)
\end{align*}
for $i=0, \dots, k-1$. Therefore, we calculate
\begin{align}
&
\|\chi_{\prod_{j=0}^{k-1}[0,a_j]\times\R^{n-k}}\|_{{\cM}_{\Phi, {\rm cube}}^\vp}
\notag\\&=
\sup_{b \in \R^n,\, R>0} \frac1{\vp(R)} \inf\left\{ \lambda>0:
I_{Q(b, R/2)}(\lambda)\le 1
\right\}
\notag\\&=
\sup_{b \in \R^n,\, R>0} \frac1{\vp(R)}\Phi^{-1}\left(\frac{R^k}{ \prod_{i=0}^{k-1}\int_{x_i=b_i-R/2}^{b_i+R/2} \chi_{[0,a_i]}(x_i)dx_i }\right)^{-1}. 
\label{eq:Morrey_norm_in_chi2}
\end{align}
By the decreasingness of $t\mapsto\Phi^{-1}(t)^{-1}$ and taking the supremum on $b \in\R^n$  in \eqref{eq:Morrey_norm_in_chi2}, we have \eqref{eq:chi-est}.
\end{proof}

\section{Proof of Theorem~\ref{thm:OM-OM-second-necessary}}

\subsection{Case $n=1$}
In this section, we give a proof of (i) of Theorem~\ref{thm:OM-OM-second-necessary}.
\begin{proof}[ Proof of Theorem~\ref{thm:OM-OM-second-necessary}~{\rm (i)}]
Let $\alpha_1(x_0)>0$ with $x_0\in \R$ given in \eqref{eq:diag_a}. By Proposition~\ref{prop:vp-const} with $n=1$ and Remark~\ref{rem:vp-const}, we have $\vp(\alpha_1(x_0))\ls 1$, 
which implies that $\psi^{-1}$ is Lipschitz.
Applying the same argument to $\psi^{-1}$, using the boundedness
of $C_{\psi^{-1}}$, we also obtain that $\psi$ is Lipschitz.
Therefore, we have the conclusion.
\end{proof}

\subsection{Higher dimensional case $n\ge 2$}
In this section, we give a proof of Theorem~\ref{thm:OM-OM-second-necessary} when $n\ge 2$.
\subsubsection{An auxiliary function}

Hereafter, we define an auxiliary function,
\[
\Psi_{k,C}(r) \equiv \frac1{\vp(r)\Phi^{-1}(Cr^k)}
\]
for any $k \in [1,n]\cap\N$ and any $C>0$.

Then the following properties hold.

\begin{lem}\label{lem:func-dec-inc}
Let $C$ be  an arbitrary positive constant. Then
\begin{enumerate}\label{lem:inc_dec}
\item
Let $k \in [1,n-1]\cap\N$. Assume that for every $A>0$, $\Psi_{k, A}$ is  almost decreasing with a constant independent of $A$. Then $\Psi_{k+1, C}$ is almost decreasing.
\item
Let $k \in [2,n]\cap\N$. 
Assume that for every $A>0$,  $\Psi_{k,A}$ is almost increasing with a constant independent of $A$. Then $\Psi_{k-1, C}$ is almost increasing.
\end{enumerate}
\end{lem}
\begin{proof}
We only show Lemma~\ref{lem:func-dec-inc}~(i), since Lemma~\ref{lem:func-dec-inc}~(ii) 
is proved similarly. Let $r, s >0$ with $r<s$. Then
\[
\Psi_{k+1,C}(s) =\frac1{\vp(s)\Phi^{-1}(Cs^{k+1})}.
\]
Since $Cs^{k+1}=(Cs)s^k$, we have
\begin{align*}
\Psi_{k+1,C}(s)
&=
\frac1{\vp(s)\Phi^{-1}((Cs)s^k)}
=
\Psi_{k,Cs}(s).
\end{align*}
By the assumption, $\Psi_{k,Cs}$ is almost decreasing with
a constant independent of $Cs$. Hence,
\begin{equation}\label{eq:lem42-1}
\Psi_{k,Cs}(s) \le  C_1 \Psi_{k, Cs}(r) = C_1 \frac1{\vp(r)\Phi^{-1}((Cs) r^k)}.
\end{equation}
Since $r<s$, $Cr^{k+1}\le (Cs)r^k$. Because $\Phi^{-1}$ is increasing, $\Phi^{-1}(Cr^{k+1})\le \Phi^{-1}((Cs)r^k)$. Therefore
\begin{equation}\label{eq:lem42-2}
\frac1{\Phi^{-1}((Cs)r^k)} \le \frac1{\Phi^{-1}(Cr^{k+1})}.
\end{equation}
Combining \eqref{eq:lem42-1} and \eqref{eq:lem42-2}, we obtain
\[
\Psi_{k+1,C}(s) \le C_1 \frac1{\vp(r)\Phi^{-1}(Cr^{k+1})} = C_1 \Psi_{k+1,C}(r).
\]
Thus, $\Psi_{k+1,C}$ is almost decreasing.
\end{proof}


\subsubsection{Case $k=1$} 
In this section, we prove the case $k=1$ in Theorem~\ref{thm:OM-OM-second-necessary}.
\begin{lem}\label{lem:chi-Rn}
Let $n \ge 2$, $a_0>0$, $\Phi \in \iPy$, and $\vp \in \cGdec_1$. Assume that $\Psi_{1, C}$ is an almost decreasing function for any $C>0$. Then $\chi_{{[0,a_0]\times\R^{n-1}}}\in {\cM}_{\Phi, {\rm cube}}^\vp(\R^n)$ and
\[
\|\chi_{[0,a_0]\times\R^{n-1}}\|_{{\cM}_{\Phi, {\rm cube}}^\vp}\sim \frac1{\vp(a_0)}.
\]
\end{lem}

\begin{proof}
Recall that
\[
\Psi_{1, C}(r) \equiv \frac1{\vp(r)\Phi^{-1}(Cr)}
\]
for any $C>0$.
By Lemmas~\ref{lem:chi-est} and \ref{lem:func-dec-inc}, it suffices to show that
if $\Psi_{k, C}$ is an almost decreasing function for all  $C>0$, then
\begin{equation}\label{eq:chi-a_0}
\sup_{R>0}\frac1{\vp(R)} \Phi^{-1}\left(\frac{R}{\min(a_0,R) }\right)^{-1}
\sim \frac1{\vp(a_0)}.
\end{equation}

We divide the proof into two cases $R\le a_0$ and $R \ge a_0$ as follows: 

{\bf Case(i)}($R\le a_0$)
The left-hand side of \eqref{eq:chi-a_0} is estimated as
\begin{align*}
\frac1{\vp(R)}
\Phi^{-1}\left(\frac{R}{\min(a_0,R)}\right)^{-1} = \frac1{\vp(R) \Phi^{-1}(1)} \ls C_1\frac1{\vp(a_0)},
\end{align*}
where we use \eqref{eq:almost_decreasing} in the second inequality.
\\
{\bf Case(ii)}($R\ge a_0$)
The left-hand side of \eqref{eq:chi-a_0} is estimated as
\begin{align*}
\frac1{\vp(R)}
\Phi^{-1}\left(\frac{R}{\min(a_0,R)}\right)^{-1}
&=
\frac1{\vp(R) }
\Phi^{-1}\left(\frac{R}{a_0}\right)^{-1}
\ls C_1\frac1{\vp(a_0)},
\end{align*}
where we use the decreasingness of $\Psi_{1,1/a_0}(r)$ for $a_0>0$ in the second inequality. Therefore, we obtain \eqref{eq:chi-a_0} and the desired result.
\end{proof}

\begin{lem}\label{lem:operator_norm=infty}
Let $\Phi \in \iPy$, $\vp \in \cG_0\cap\cGdec_1$, $x_0\in \R^n$ and $\alpha_i\equiv \alpha_i(x_0) >0$ be positive numbers such that $\alpha_i \le \alpha_j$ for $1 \le i < j$.
If $\Psi_{1, C}$ is an almost decreasing function for any $C>0$, then 
\[{\displaystyle \liminf_{\alpha_1 \to 0}} \|C_{ {\rm diag}(\alpha_1, \cdots, \alpha_n)}\|_{
{\cM}_{\Phi, {\rm cube}}^\vp
\to 
{\cM}_{\Phi, {\rm cube}}^\vp
} = \infty.\]
\end{lem}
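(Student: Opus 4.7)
The plan is to exhibit a single slab-shaped test function whose $\cM_\Phi^\vp$-norm stays bounded while the norm of its image under $C_D$ blows up as $\alpha_1\to 0$, where $D\equiv{\rm diag}(\alpha_1,\dots,\alpha_n)$. The dilation by $D$ stretches the first coordinate by the factor $1/\alpha_1$, so the slab $[0,1]\times\R^{n-1}$ is mapped to $[0,1/\alpha_1]\times\R^{n-1}$; the decay hypothesis $\vp\in\cG_0$ is then what forces the ratio of norms to blow up.

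The first step is to apply Lemma~\ref{lem:chi-Rn} in the case $k=1$, which yields
\[
\|\chi_{[0,a]\times\R^{n-1}}\|_{\cM_\Phi^\vp} \sim \frac1{\vp(a)} \quad\text{for every } a>0.
\]
Note that only the parameter $a_0=a$ actually enters the set description when $k=1$, while the phantom parameters $a_1,\dots,a_{n-1}$ appearing in the hypothesis of Lemma~\ref{lem:chi-Rn} can be chosen arbitrarily large, so the ordering condition is trivially satisfied. The second step is the direct computation
\[
C_D\chi_{[0,1]\times\R^{n-1}}(x)
=\chi_{[0,1]\times\R^{n-1}}(\alpha_1 x_1,\dots,\alpha_n x_n)
=\chi_{[0,1/\alpha_1]\times\R^{n-1}}(x).
\]
Combining these two observations gives the lower bound
\[
\|C_D\|_{\cM_\Phi^\vp\to\cM_\Phi^\vp}
\ge \frac{\|\chi_{[0,1/\alpha_1]\times\R^{n-1}}\|_{\cM_\Phi^\vp}}{\|\chi_{[0,1]\times\R^{n-1}}\|_{\cM_\Phi^\vp}}
\gs \frac{\vp(1)}{\vp(1/\alpha_1)}.
\]

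Finally, since $\vp\in\cG_0$ satisfies $\lim_{r\to\infty}\vp(r)=0$, letting $\alpha_1\to 0^+$ drives $\vp(1/\alpha_1)\to 0$, so the right-hand side tends to infinity; this immediately yields the required divergence of the $\liminf$ (in fact, the full limit). The only subtle point I expect is the insistence that Lemma~\ref{lem:chi-Rn} really does give $\|\chi_{[0,a]\times\R^{n-1}}\|_{\cM_\Phi^\vp}\sim 1/\vp(a)$ uniformly in the large-$a$ regime, because at first glance the hypothesis $a_0\le a_1\le\dots\le a_{n-1}$ looks restrictive. However, a glance at the proof of that lemma shows the case $k=1$ reduces to evaluating $\sup_{R>0}\{\vp(R)\Phi^{-1}(R/\min(a_0,R))\}^{-1}$, whose value is achieved at $R=a_0$ and is therefore completely insensitive to the other parameters.
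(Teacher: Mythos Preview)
Your proof is correct and follows essentially the same approach as the paper: both use Lemma~\ref{lem:chi-Rn} on a slab-shaped test function $\chi_{[0,1]^k\times\R^{n-k}}$, compute that $C_D$ sends it to $\chi_{\prod_{i=1}^k[0,\alpha_i^{-1}]\times\R^{n-k}}$, and then invoke $\vp\in\cG_0$ to conclude $\vp(1)/\vp(\alpha_1^{-1})\to\infty$. Your explicit choice $k=1$ is in fact cleaner than the paper's version, since with $k=1$ there is no ordering issue among the $\alpha_i^{-1}$ to worry about when applying Lemma~\ref{lem:chi-Rn}.
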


\begin{proof}
Put $D = {\rm diag}(\alpha_1, \cdots, \alpha_n)$ and $M = \|C_D\|_{{\cM}_{\Phi, {\rm cube}}^\vp\to {\cM}_{\Phi, {\rm cube}}^\vp}$. By Lemma~\ref{lem:chi-Rn}, we estimate
\begin{align}\label{eq:bdd-M2}
\frac1{\vp(\alpha_1^{-1})} \sim\left\|\chi_{[0,\alpha_1^{-1}]\times\R^{n-1}}\right\|_{
{\cM}_{\Phi, {\rm cube}}^\vp
}
&=
\left\|\chi_{[0,1]\times\R^{n-1}} \circ D\right\|_{
{\cM}_{\Phi, {\rm cube}}^\vp
}
\notag\\&\le M \left\|\chi_{[0,1]\times\R^{n-1}} \right\|_{
{\cM}_{\Phi, {\rm cube}}^\vp}
 \sim M\frac1{\vp(1)}.
\end{align}
By \eqref{eq:bdd-M2}, we have
\begin{equation}\label{eq:a1->0}
\liminf_{\alpha_1\to 0} M \gs \liminf_{\alpha_1\to 0} \frac{\vp(1)}{\vp(\alpha_1^{-1})} = \infty,
\end{equation}
where we use $\vp \in {\mathcal G}_0$ for the second equation, which yields $\varphi(1)\neq 0$ and $\varphi (\alpha_1^{-1})\to 0$ as $\alpha _1^{-1} \to \infty$, and hence  $1/\varphi (\alpha_1^{-1})\to \infty$.
Therefore, we have the conclusion.
\end{proof}
If $\vp \in\cGdec_2$, then \eqref{eq:bdd-M2} gives
\[
M  \gs  \frac{\vp(1)}{\vp(\alpha_1^{-1})} \gs \vp(\alpha_1).
\]
By the previous inequalities, we have the following corollary.

\begin{cor}\label{cor:vp<M}
We use the same notation as in Lemma~\ref{lem:operator_norm=infty}. If $\vp \in \cGdec_2$, then
$
\vp(\alpha_1)\ls  \|C_{ {\rm diag}(\alpha_1, \cdots, \alpha_n)}\|_{
{\cM}_{\Phi, {\rm cube}}^\vp
\to
{\cM}_{\Phi, {\rm cube}}^\vp
}.
$
\end{cor}

%

\subsubsection{Case $k \in [2, n]\cap\N$}

In this section, we show the case $k \in [2, n]\cap\N$ in Theorem~\ref{thm:OM-OM-second-necessary}.
\begin{lem}\label{lem:chi-norm-multi-second}
Let $n \ge 2$, $k \in [2, n]\cap\N$, $\Phi \in \iPy$, and $\vp \in \cGdec_1$. Assume that the
sequence $\{a_j\}_{j=0}^{n-1}$ satisfies $0 < a_0 \le \dots \le a_{n-1}$.
Assume that $\Psi_{k-1,C}$ is almost increasing and $\Psi_{k,C}$ is almost decreasing,
uniformly in $C>0$.
Then $\chi_{\prod_{j=0}^{n-1}[0,a_j]}\in {\cM}_{\Phi, {\rm cube}}^\vp(\R^n)$, and
\begin{equation}\label{eq:chi-norm-multi-second}
\left\|\chi_{\prod_{j=0}^{n-1}[0,a_j]}\right\|_{{\cM}_{\Phi, {\rm cube}}^\vp} \sim
\frac1{\vp(a_{k-1})}\Phi^{-1}
\left(\frac{a_{k-1}^{k-1}}{\prod_{j=0}^{k-2}a_j}\right)^{-1}.
\end{equation}
\end{lem}
\begin{proof}
Since we have only to consider cubes of the form $[0,R]^n$ for $R > 0$ as candidates for the supremum in the Orlicz--Morrey norm $\|\cdot\|_{{\cM}_{\Phi, {\rm cube}}^\vp}$, we have the identity
\begin{equation}\label{eq:chi-a_n}
\|\chi_{\prod_{j=0}^{n-1}[0,a_j]}\|_{{\cM}_{\Phi, {\rm cube}}^\vp} =\sup_{R>0}\frac1{\vp(R)} \Phi^{-1}\left(\frac{R^n}{\prod_{j=0}^{n-1}\min(a_j,R) }\right)^{-1} \equiv \sup_{R>0} F(R).
\end{equation}
We first evaluate $F$ at $R=a_{k-1}$. Then we estimate $F$ on $0<R\le a_{k-1}$ and $R\ge a_{k-1}$. 
Set
\begin{equation*}
T\equiv \frac1{\vp(a_{k-1})}\Phi^{-1}
\left(\frac{a_{k-1}^{k-1}}{A_{k-1}}\right)^{-1},
\end{equation*}
where $A_m =\prod_{j=0}^{m-1}a_j$.\\
{\bf Case (i)}$(R = a_{k-1})$
Evaluating $F$ at $R=a_{k-1}$, we have $F(a_{k-1}) =T$.
Therefore, we have
\[
\sup_{R>0} F(R) \ge T.
\]
{\bf Case (ii)}($0<R\le a_{k-1}$)
In this case, we have
\[
\prod_{j=0}^{n-1}\min(a_j,R)  = \prod_{j=0}^{k-2} \min(a_j,R) \prod_{j=k-1}^{n-1} \min(a_j,R)  \le A_{k-1}R^{n-k+1}.
\]
Hence
\[
\frac{R^n}{ \prod_{j=0}^{n-1}\min(a_j,R)   } \ge \frac{R^{k-1}}{A_{k-1}}.
\]
Since $\Phi^{-1}(\cdot)^{-1}$ is decreasing, we estimate
\[
F(R) \le \Psi_{k-1, A_{k-1}^{-1}}(R) \ls \Psi_{k-1, A_{k-1}^{-1}}(a_{k-1})  =T,
\]
where we use almost increasingness of $\Psi_{k-1,A_{k-1}^{-1}}$ for the second inequality.

{\bf Case (iii)}($R \ge a_{k-1}$)
In this case, we have
\[
\prod_{j=0}^{n-1}\min(a_j,R)  = \prod_{j=0}^{k-1} \min(a_j,R) \prod_{j=k}^{n-1} \min(a_j,R)  \le A_kR^{n-k}.
\]
Hence,
\[
\frac{R^n}{ \prod_{j=0}^{n-1}\min(a_j,R)   } \ge \frac{R^{k}}{A_{k}}.
\]
Since $\Phi^{-1}(\cdot)^{-1}$ is decreasing, we estimate
\begin{equation}\label{eq:increasing}
F(R) \le \Psi_{k, A_{k}^{-1}}(R) \ls \Psi_{k, A_{k}^{-1}}(a_{k-1})  =T,
\end{equation}
where we use almost decreasingness of $\Psi_{k,A_k^{-1}}$ for the second inequality. Here we used
$A_k=a_{k-1}A_{k-1}$ in the last equality.

Therefore,

\[
T
\le
\sup_{R>0}F(R)
\lesssim
T.
\]

Hence, by \eqref{eq:chi-a_n},

\[
\|\chi_{\prod_{j=0}^{n-1}[0,a_j]}\|_{{\cM}_{\Phi,{\rm cube}}^\vp}
\sim
\frac1{\vp(a_{k-1})}
\Phi^{-1}
\left(
\frac{a_{k-1}^{k-1}}{A_{k-1}}
\right)^{-1}.
\]
This completes the proof.
\end{proof}
Using the definition of operator norm~\eqref{eq:operator_norm} and Lemma~\ref{lem:chi-norm-multi-second}, we obtain the following corollary:
\begin{cor}\label{chi-operator-norm-multi}
Under the assumptions of Lemma~\ref{lem:chi-norm-multi-second}, the following holds.
\begin{align*}
&
\|C_{{\rm diag}(a_0, \dots, a_{n-1}) }\|_{{\cM}_{\Phi, {\rm cube}}^\vp \to{\cM}_{\Phi, {\rm cube}}^\vp   }
\notag\\&\gs
\frac{\vp(R_{k-1})}{ \vp(a_{k-1}^{-1}R_{k-1}) }
\Phi^{-1}\left(\frac{R_{k-1}^{k-1}}{\prod_{j=1}^{k-2}R_j}\right)
\Phi^{-1}\left(\frac{(a_{k-1}^{-1}R_{k-1})^{k-1} }{a_0^{-1}\prod_{j=1}^{k-2}a_j^{-1} R_j}\right)^{-1}
\end{align*}
for $1 \le R_1 \le \cdots \le R_{n-1}$ with $0< a_0^{-1} \le a_1^{-1}R_1 \le \cdots \le a_{n-1}^{-1}R_{n-1}$.
\end{cor}

\begin{proof}[ Proof of Theorem~\ref{thm:OM-OM-second-necessary}~{\rm (ii)}]
Let $x_0\in \R^n$ and $\alpha_j \equiv \alpha_j(x_0)$ for $j \in [1,n]\cap\N$. First, we show that $\vp(\alpha_1(x_0))\ls 1$. We consider the cases $k=1$ and $k\in [2,n]\cap\N$.
 
\noindent
{\bf Case(i)}($k=1$)
By Corollary~\ref{cor:vp<M}, we have $\vp(\alpha_1(x_0))\ls 1$.

\noindent
{\bf Case(ii)}($k\in [2,n]\cap\N$)
Set $\alpha_i \le \alpha_j$ if $1 \le i \le j \le n$, and $\Sigma(x_0) \equiv {\rm diag}(\alpha_1, \cdots, \alpha_n)$. Then, by Corollary~\ref{chi-operator-norm-multi}, we have
\begin{equation}\label{eq:diag_chi}
\|C_{\Sigma(x_0)}\|_{{\cM}_{\Phi,{\rm cube}}^\vp\to{\cM}_{\Phi,{\rm cube}}^\vp}
\gs
\frac{\vp(R_{k-1})}{\vp(\alpha_k^{-1}R_{k-1})}
\Phi^{-1}\left(
\frac{R_{k-1}^{k-1}}{\prod_{i=1}^{k-2}R_i}
\right)
\Phi^{-1}\left(
\frac{(\alpha_k^{-1}R_{k-1})^{k-1}}
{\alpha_1^{-1}\prod_{i=1}^{k-2}\alpha_{i+1}^{-1}R_i}
\right)^{-1}
\end{equation}
for $1 =R_0\le R_1 \le \cdots \le R_{n-1}$ with $0< \alpha_1^{-1} \le \alpha_2^{-1}R_1 \le \cdots \le \alpha_n^{-1}R_{n-1}$.
Since $\Psi_{k-1,C}$ is an almost increasing function and $\Phi^{-1}(\cdot)^{-1}$ is a  decreasing function, the right-hand side of \eqref{eq:diag_chi} is estimated as follows:
\begin{align}\label{eq:lower_est}
&\notag
\vp(R_{k-1})
\Phi^{-1}\left(\frac{R_{k-1}^{k-1}}{\prod_{i=1}^{k-2}R_i}\right)
\Phi^{-1}\left(\frac{(\alpha_k^{-1}R_{k-1})^{k-1} }{\alpha_1^{-1}\prod_{i=1}^{k-2}\alpha_{i+1}^{-1} R_i}\right)^{-1}
\\&\gs \cdots \gs
\vp(R_1)
\Phi^{-1}\left(\frac{R_1}{R_0}\right)
\Phi^{-1}\left(1\right)^{-1} \gs \vp(1).
\end{align}
By using $\vp \in \cGdec_2$ and \eqref{eq:lower_est}, we have
\begin{equation*}
1 \gs
\|C_{\Sigma(x_0) }\|_{{{\cM}_{\Phi, {\rm cube}}^\vp}\to{{\cM}_{\Phi, {\rm cube}}^\vp}}
\gs
\frac{\vp(1)}{\vp(\alpha_k^{-1}R_{k-1})}
\gs
\frac{\vp(1)}{ \vp(\alpha_1^{-1}) }
\gs \vp(\alpha_1).
\end{equation*}
By the results in Case~(i) and Case~(ii), we obtain $\vp(\alpha_1(x_0))\ls 1$.
The inequality $\vp(\alpha_1(x_0))\ls 1$ holds for every $k \in [1, n]\cap \N$, independent of the choice of $k$. Therefore, \eqref{eq:condition-2} is satisfied, and the inverse mapping $\psi^{-1}$ is Lipschitz. Applying the same argument to $\psi^{-1}$ and using the boundedness of both $C_\psi$ and $C_{\psi^{-1}}$, we also conclude that $\psi$ is Lipschitz. Using Proposition~\ref{prop:vp-const}, the desired conclusion follows.
\end{proof}

\section{Proof of Theorem~\ref{thm:wOM-wOM}}

We now define weak-type spaces and introduce composition operators in weak-type spaces.
\begin{defn}[{\rm Weak type spaces, see~\cite[Definition~1.12]{Hatano-Ikeda-Ishikawa-Sawano2021}}]\label{defn:weak-type composition}
Let $(B(\R^n), \|\cdot\|_B)$ be a linear subspace of $L^0(\R^n)$ such that $\| |f| \|_B = \| f \|_B$
for all $f \in B(\R^n)$. The weak-type space $(\mathrm{w}\hskip-0.6pt{B}(\R^n), \|\cdot\|_{\mathrm{w}\hskip-0.6pt{B}})$ of $B(\R^n)$ is defined by
\[
\mathrm{w}\hskip-0.6pt{B}(\R^n) \equiv \left\{f \in L^0(\R^n): \left\|f\right\|_{ \mathrm{w}\hskip-0.6pt{B} } < \infty\right\},
\]
endowed with the quasi-norm
\[
\left\|f\right\|_{ \mathrm{w}\hskip-0.6pt{B} }
\equiv \sup_{\lambda>0} \lambda \left\|\chi_{\left\{x\in \R^n: |f(x)| >\lambda\right\}}\right\|_B.
\]
\end{defn}

\begin{prop}{\rm \cite[Theorem~1.13]{Hatano-Ikeda-Ishikawa-Sawano2021}}\label{prop:weak-type composition}
Let $(B(\R^n), \|\cdot\|_B)$ be a quasi-normed space. Then, $C_\psi$ induced by $\psi$
is bounded on the weak-type space $(\mathrm{w}\hskip-0.6pt{B}(\R^n), \|\cdot\|_{\mathrm{w}\hskip-0.6pt{B}})$ if and only if there exists a constant $K>0$ such that for all measurable sets $A$ in $\R^n$, the estimate holds as follows.
\[
\left\|\chi_{\psi^{-1}(A)}\right\|_{B} \le K \left\|\chi_{A}\right\|_{B}.
\]
In particular, we obtain
\[
\left\|C_\psi\right\|_{\mathrm{w}\hskip-0.6pt{B} \to \mathrm{w}\hskip-0.6pt{B} } = \sup_A \frac{\left\|\chi_{\psi^{-1}(A)}\right\|_{B} }{ \left\|\chi_{A}\right\|_{B}},
\]
where the supremum is taken over all measurable sets $A\subset \R^n$ with $\left\|\chi_A\right\|_{B} \in (0,\infty)$.
\end{prop}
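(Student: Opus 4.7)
The plan is to exploit the observation that the composition operator commutes with super-level sets: for any $t>0$ and any $f\in L^0(\R^n)$,
\[
\{x\in\R^n:|C_\psi f(x)|>t\} \;=\; \psi^{-1}\bigl(\{y\in\R^n:|f(y)|>t\}\bigr).
\]
This single identity will drive both implications, and the definition of the weak quasi-norm as $\sup_{t>0}t\,\|\chi_{\{|f|>t\}}\|_B$ makes the passage from characteristic functions to general $f$ mechanical.

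For the sufficiency direction, I would assume the estimate $\|\chi_{\psi^{-1}(A)}\|_B \le K\|\chi_A\|_B$ for every measurable $A$. Apply it with $A=A_t \equiv \{|f|>t\}$, so that $\chi_{\{|C_\psi f|>t\}}=\chi_{\psi^{-1}(A_t)}$, multiply by $t$, and take the supremum over $t>0$; this yields $\|C_\psi f\|_{\mathrm{w}\hskip-0.6pt{B}}\le K\|f\|_{\mathrm{w}\hskip-0.6pt{B}}$ for all $f\in L^0(\R^n)$.

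For the necessity direction, I would test the boundedness of $C_\psi$ on characteristic functions. For any measurable $A$ the super-level sets of $\chi_A$ equal $A$ when $\lambda\in(0,1)$ and are empty for $\lambda\ge1$, hence
\[
\|\chi_A\|_{\mathrm{w}\hskip-0.6pt{B}}
=\sup_{\lambda\in(0,1)}\lambda\,\|\chi_A\|_B
=\|\chi_A\|_B,
\]
and identically $\|\chi_{\psi^{-1}(A)}\|_{\mathrm{w}\hskip-0.6pt{B}}=\|\chi_{\psi^{-1}(A)}\|_B$. Since $C_\psi\chi_A=\chi_{\psi^{-1}(A)}$, applying the operator bound $\|C_\psi\|_{\mathrm{w}\hskip-0.6pt{B}\to\mathrm{w}\hskip-0.6pt{B}}=:M$ to $\chi_A$ yields $\|\chi_{\psi^{-1}(A)}\|_B\le M\|\chi_A\|_B$, giving the desired estimate with $K=M$.

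Finally, the identity $\|C_\psi\|_{\mathrm{w}\hskip-0.6pt{B}\to\mathrm{w}\hskip-0.6pt{B}} = \sup_A\|\chi_{\psi^{-1}(A)}\|_B/\|\chi_A\|_B$ follows by combining the two directions: the necessity step shows the supremum is dominated by the operator norm, while the sufficiency step with $K$ equal to the supremum shows the operator norm is dominated by the supremum. There is no substantive obstacle in this argument; the only point demanding attention is verifying the equality $\|\chi_A\|_{\mathrm{w}\hskip-0.6pt{B}}=\|\chi_A\|_B$, which is a short computation using the cut-off at $\lambda=1$ in the distribution-function definition.
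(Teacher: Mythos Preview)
Your argument is correct and is the natural, essentially canonical, proof of this statement: the level-set identity $\{|C_\psi f|>t\}=\psi^{-1}(\{|f|>t\})$ reduces the weak quasi-norm of $C_\psi f$ to a supremum of $t\,\|\chi_{\psi^{-1}(A_t)}\|_B$, and the computation $\|\chi_A\|_{\mathrm{w}B}=\|\chi_A\|_B$ closes the loop for both necessity and the operator-norm identity. Note, however, that the present paper does not give a proof of this proposition at all; it is quoted verbatim from \cite[Theorem~1.13]{Hatano-Ikeda-Ishikawa-Sawano2021} and then applied with $B=\cM_\Phi^\vp$, so there is no in-paper argument to compare yours against---your write-up simply supplies what the paper outsources.
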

We will apply Proposition~\ref{prop:weak-type composition} with $B(\R^n) =\cM_\Phi^\vp(\R^n)$ to prove Theorem~\ref{thm:wOM-wOM} since the Orlicz--Morrey spaces are quasi-normed spaces. Hence, it remains to show that the weak Orlicz--Morrey spaces are weak-type spaces. In other words, we have the following assertion:
\begin{lem}\label{lem:weak-norm}
Let $\Phi \in \iPy$, and $\vp\in\cGdec_1$. Then
\[
\|f\|_{\wcM_\Phi^\vp} = \sup_{\lambda>0}\lambda\left\|\chi_{\{x\in \R^n :|f(x)| >\lambda\}} \right\|_{{\mathcal M}_\Phi^\vp}.
\]
\end{lem}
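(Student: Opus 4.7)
The plan is to rewrite both sides of the claimed identity as the same triple supremum over $(a,r,t)$, by expressing both the Luxemburg norm of a characteristic function and the weak Luxemburg norm in terms of distributional data of $f$. The key preliminary computation is that, for any measurable set $E$ and any ball $B=B(a,r)$, the integrand $\Phi(\chi_E(x)/\lambda)$ equals $\Phi(1/\lambda)$ on $E\cap B$ and vanishes elsewhere because $\Phi(0)=0$; this gives the closed form
\[
\|\chi_E\|_{\Phi,B}
\;=\;\inf\Bigl\{\lambda>0:\Phi(1/\lambda)\,|E\cap B|/|B|\le 1\Bigr\}
\;=\;\frac{1}{\Phi^{-1}\!\bigl(|B|/|E\cap B|\bigr)},
\]
with the convention $1/\Phi^{-1}(\infty)=0$, where the second equality uses the elementary fact that, for a Young function, $\Phi(s)\le u$ and $s\le\Phi^{-1}(u)$ are equivalent up to boundary (a consequence of convexity/continuity of $\Phi$ and \eqref{inverse ineq}). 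Specializing to $E=\{|f|>t\}$ and dividing by $\vp(r)$ yields
\[
t\,\|\chi_{\{|f|>t\}}\|_{\cM_\Phi^\vp}
\;=\;\sup_{a\in\R^n,\,r>0}\frac{t}{\vp(r)\,\Phi^{-1}\!\bigl(|B(a,r)|/m(B(a,r),f,t)\bigr)}.
\]

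The second step is to show the analogous closed form for the weak Luxemburg norm,
\[
\|f\|_{\Phi,B(a,r),\weak}
\;=\;\sup_{t>0}\frac{t}{\Phi^{-1}\!\bigl(|B(a,r)|/m(B(a,r),f,t)\bigr)}.
\]
One direction is immediate: if $\mu$ lies in the admissible set from the defining infimum, i.e.\ $\Phi(t/\mu)\,m(B,f,t)\le|B|$ for all $t$, then applying the same equivalence as above gives $t/\mu\le\Phi^{-1}(|B|/m(B,f,t))$ for every $t$, so $\mu$ dominates the right-hand side. Conversely, any $\mu$ strictly exceeding the right-hand supremum satisfies $t/\mu<\Phi^{-1}(|B|/m(B,f,t))$, whence $\Phi(t/\mu)\le\Phi(\Phi^{-1}(|B|/m(B,f,t)))\le|B|/m(B,f,t)$ by \eqref{inverse ineq}, putting $\mu$ into the admissible set. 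Dividing by $\vp(r)$ and taking the sup over balls, one obtains
\[
\|f\|_{\wcM_\Phi^\vp}
\;=\;\sup_{a,r,t}\frac{t}{\vp(r)\,\Phi^{-1}\!\bigl(|B(a,r)|/m(B(a,r),f,t)\bigr)},
\]
which matches the triple supremum from the first step after the harmless exchange of $\sup_t$ with $\sup_{a,r}$. The identity then follows.

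The main obstacle is the careful handling of the generalized inverse $\Phi^{-1}$ in the weak-norm formula. Since $\Phi^{-1}$ is defined via an infimum, only the one-sided inequalities in \eqref{inverse ineq} are available, and one must verify that the passage from $\Phi(s)\le u$ to $s\le\Phi^{-1}(u)$ (and back) is legitimate in the supremum and infimum manipulations. This is resolved by working with strict inequalities and then sending the slackness to zero using continuity of $\Phi$ on its finiteness domain, combined with the fact that both $\|f\|_{\Phi,B,\weak}$ and its proposed closed form are expressed as infima/suprema that are preserved under such approximation. The degenerate cases $m(B,f,t)\in\{0,|B|\}$ are absorbed by the conventions $1/\Phi^{-1}(\infty)=0$ and $\Phi^{-1}(1)\in(0,\infty)$, so they contribute $0$ or a harmless constant to the supremum over $t$.
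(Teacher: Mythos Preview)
Your proposal is correct and follows essentially the same route as the paper: both compute $\|\chi_{\{|f|>t\}}\|_{\Phi,B}=1/\Phi^{-1}(|B|/m(B,f,t))$ and then establish the ball-wise identity $\|f\|_{\Phi,B,\weak}=\sup_{t>0} t/\Phi^{-1}(|B|/m(B,f,t))$ via the two one-sided inequalities from \eqref{inverse ineq}, after which taking the supremum over balls (your ``triple supremum'' rephrasing) finishes the argument. The only cosmetic difference is that the paper works ball-by-ball before passing to the Morrey norm, while you package the exchange of $\sup_t$ and $\sup_{a,r}$ explicitly at the end.
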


We can prove this lemma in the same way as~\cite[Lemma~2.4]{Hatano-Kawasumi-Ono2023} about the weak Orlicz spaces (see~\cite[Lemma~2.4]{Hatano-Kawasumi-Ono2023} for the definitions). Therefore, we omit the details.

%
%
%

\medskip

{\it Acknowledgement.} 
This work was supported by the Research Institute for Mathematical Sciences,
an International Joint Usage/Research Center located in Kyoto University. he third author presented this work at the RIMS Workshop entitled Research on Real, Complex and Functional Analysis from the Perspective of Reproducing Function Spaces. We appreciate the comments received from the organizers and participants at the RIMS
Workshop.\newline

{\it Funding.} The first and second authors acknowledge support from the Japan Science and Technology Agency (JST) under CREST grant JPMJCR1913. The second author acknowledges support from the JST under ACT-X grant JPMJAX2004.\newline

{\it Availability of data and material.} No data or materials were used to support this study.
\newline

{\it Competing interests.} The authors declare that there are no conflicts of interest
regarding the publication of this paper.\newline

{\it Authors' contributions.} The three authors contributed equally to this paper and  reviewed the manuscript.\newline

{\it AI disclosure.} In this study, we used OpenAI ChatGPT (GPT-5.5) to improve English expression and assist in reviewing mathematical statements and proofs. All AI-generated outputs were carefully reviewed and verified by the authors.


\section*{Appendix}
Here, we show~\eqref{eq:appendix_A} and \eqref{eq:appendix_B}.
\subsection*{Proof of Remark~\ref{rem:classical-morrey} (iii)}
Let $e \le q \le p$. We prove that
\begin{equation*}
\cM_2^\rho(\R^n)\cap L^\infty(\R^n) \subset \cM_\Phi^\vp(\R^n).
\end{equation*}
Assume that $f \in \cM_2^\rho(\R^n)$ and $f \in L^\infty(\R^n)$ with $\rho= 2p/q $. 
The case $f=0$ is trivial. Hence we may assume that
\[
M\equiv\|f\|_{L^\infty}>0,
\qquad
g\equiv \frac{f}{M}.
\]
Then
\[
|g(x)|\le1
\qquad\text{a.e.}
\]
Moreover,
\[
\|g\|_{\cM_2^\rho}
=\frac1M
\|f\|_{\cM_2^\rho}
<\infty.
\]
By the definition of $\Phi$, we have
\[
\Phi(t)\le
\max(t^2,t^q) \equiv \Psi(t) .
\]
Then
\[
\Psi(\ve t) \le \max(\ve^2,\ve^q)\max(t^2,t^q)  =\Psi(\ve)\Psi(t).
\]
Hence, 
\begin{align*}
\left\|g\right\|_{\cM_\Phi^\vp} 
&\le\sup_{a\in \R^n, r>0}r^{n/p} \inf\left\{ \lambda>0 :  \Psi\left(\frac1\lambda\right)\frac1{|B(a,r)|} \int_{B(a,r)}  \Psi(|g(x)|) dx \le 1\right\}
\\&=\sup_{a\in \R^n, r>0}r^{n/p} \Psi^{-1}\left( \frac{|B(a,r)|}{\int_{B(a,r)}  \Psi(|g(x)|) dx }  \right)^{-1},
\end{align*}
where
\[
\Psi^{-1}(t)=
\begin{cases}
t^{1/2}, & 0\le t<1,\\
t^{1/q}, & 1\le t .
\end{cases}
\]

By $0 < |g(x)| <1$, we compute
\begin{align*}
0 < |g(x)| <1
&\Longrightarrow
0 < \Psi(|g(x)|)<\Psi(1) =1
\\&\Longrightarrow
0 < \frac1{|B(a,r)|}\int_{B(a,r)}\Psi(|g(x)|)dx<1
\\&\Longrightarrow
1 <  \frac{|B(a,r)|}{\int_{B(a,r)}  \Psi(|g(x)|) dx }.
\end{align*}
Consequently,
\begin{align*}
\left\|g\right\|_{\cM_\Phi^\vp} &\le \sup_{a\in \R^n, r>0}r^{n/p} \left(\frac1{|B(a,r)|} \int_{B(a,r)}  |g(x)|^2 dx   \right)^{1/q}
\\&\sim\
\left(
\sup_{a\in \R^n, r>0} r^{qn/2p -n/2} \left(\int_{B(a,r)}  |g(x)|^2 dx   \right)^{1/2}
\right)^{2/q}
\sim
\left\|g\right\|_{\cM_2^\rho}^{2/q} < \infty.
\end{align*}

Finally, since $f=Mg$ and
\[
\Psi(Mt)
\le
\Psi(M)\Psi(t),
\]
we obtain
\[
\frac1{|B(a,r)|}
\int_{B(a,r)}
\Psi(|f(x)|)\,dx
\le
\Psi(M)
\frac1{|B(a,r)|}
\int_{B(a,r)}
\Psi(|g(x)|)\,dx.
\]
Hence,
\[
\|f\|_{\cM_\Phi^\vp}
\le
\frac1{\Psi^{-1}(1/\Psi(M))}
\,
\|g\|_{\cM_\Phi^\vp}
<\infty.
\]
Therefore,
$f\in\cM_\Phi^\vp(\R^n)$.

\subsection*{Proof of Remark~\ref{rem:Linf}}
We prove only that if $0 < \inf_{r>0} \vp(r) \le \sup_{r>0} \vp(r) < \infty$, then 
\begin{equation}\label{eq:appendix_B}
\Phi^{-1}(1) \inf_{r>0}\vp(r) \|f\|_{\cM_\Phi^\vp} \le \|f\|_{L^\infty} \le \Phi^{-1}(1) \sup_{r>0}\vp(r) \|f\|_{\cM_\Phi^\vp}
\end{equation}
for $f \in {\cM_\Phi^\vp}(\R^n)$. First we show the left inequality in \eqref{eq:appendix_B}. In fact, we have
\[
\frac1{|B(a,r)|}\int_{B(a,r)} \Phi\left(\frac{|f(x)| }{\Phi^{-1}(1)^{-1}  \|f\|_{L^\infty}  }\right) dx \le 1
\]
and 
\[
 \inf_{r>0}\vp(r)\Phi^{-1}(1) \|f\|_{\cM_\Phi^\vp} \le \|f\|_{L^\infty} . 
\]

Regarding the right inequality in \eqref{eq:appendix_B}, by Lebesgue's differentiation theorem, we estimate
\begin{align*}
&
\Phi^{-1}(1)^{-1}|f(x)| 
\\&\le 
\inf\left\{\lambda >0 :\Phi\left(\frac{|f(x)|}\lambda\right) \le 1\right\}
\\&=\inf\left\{\lambda >0 : \lim_{r\to 0} \frac1{|B(x,r)|}\int_{B(x,r)}\Phi\left(\frac{|f(y)|}\lambda\right)dy \le 1\right\}
\\&\le \sup_{r>0}\frac{\vp(r)}{\vp(r)} \inf\left\{\lambda >0 : \frac1{|B(x,r)|}\int_{B(x,r)}\Phi\left(\frac{|f(y)|}\lambda\right)dy \le 1\right\}
\\&\le \sup_{r>0} \vp(r)\|f\|_{\cM_\Phi^\vp}.
\end{align*}
Therefore, we have
\begin{equation}\label{eq:AppendixB-1}
\|f\|_{L^\infty} \le \Phi^{-1}(1) \sup_{r>0}\vp(r) \|f\|_{\cM_\Phi^\vp}.
\end{equation}

\begin{remark*}
We present another method for deriving equation \eqref{eq:AppendixB-1} below:
Using Lebesgue's differentiation theorem and the embedding $\cM_\Phi^\vp(\R^n) \subset\cM_1^\vp(\R^n)$, we estimate
\begin{align*}
|f(x)| &\le \lim_{r\to 0} \frac1{|Q(x,r)|} \int_{Q(x,r)} |f(y)| dy
\\&\le \sup_{r>0}\vp(r) \|f\|_{\cM_1^\vp}
\\&\le \Phi^{-1}(1)\sup_{r>0}\vp(r) \|f\|_{\cM_\Phi^\vp}.
\end{align*}
Here, we note that the embedding $\cM_\Phi^\varphi(\R^n) \subset \cM_1^\varphi(\R^n)$ follows from equation (2.5) in \cite{Sawano-Sugano-Tanaka2011}: by setting $g=1$ in (2.5) and multiplying both sides by $\vp(\cdot)$, the desired result is obtained.
\end{remark*}


\begin{thebibliography}{99}


\bibitem{Arora-Datt-Verma2007}
S. C. Arora, G. Datt, and S. Verma, Composition operators on Lorentz spaces, Bull. Aust. Math. Soc., 76 (2007), no. 2, 205--214.

\bibitem{Bevanda-Sosnowski-Hirche2021}
P. Bevanda, S. Sosnowski, and S. Hirche, Koopman operator dynamical models: Learning, analysis and control, Annu. Rev. Control, 52 (2021), 197--212.


\bibitem{Bourdaud-Sickel1999}
G. Bourdaud and W. Sickel, Changes of variable in Besov spaces, Math. Nachr., 198 (1999), no. 1, 19--39.


\bibitem{Cui-Hudzik-Kumar-Maligranda2004}
Y. Cui, H. Hudzik, R. Kumar, and L. Maligranda, Composition operators in Orlicz spaces, J. Aust. Math. Soc., 76 (2004), no. 2, 189--206.

\bibitem{Crnjaric_Zic-Macesicy-Mezic2020}
N. \v{C}rnjari\'c-\v{Z}ic, S. Ma\'ce\v{s}i\'c, and I. Mezi\'c, Koopman operator spectrum for random dynamical systems, J. Nonlinear Sci., 30 (2020), 2007--2056.


\bibitem{Donaldson-Trudinger(1971)}
T. K. Donaldson and N. S. Trudinger, Orlicz--Sobolev spaces and imbedding theorems, J. Funct. Anal., 8 (1971), no. 1, 52--75.

\bibitem{Guliyev2009}
V. S. Guliyev,  Boundedness of the maximal, potential and singular operators in the generalized Morrey spaces, J. Inequal. Appl., 2009 (2009), 1--20.

\bibitem{Gunawan-Hakim-Limanta-Masta2017}
H. Gunawan, D. I. Hakim, K. M. Limanta, and A. A. Masta, Inclusion properties of generalized Morrey spaces, Math. Nachr., 290 (2017), no. 2--3, 332--340.

\bibitem{Hakim-Sawano2016}
D. I. Hakim and Y. Sawano, Interpolation of generalized Morrey spaces, Rev. Mat. Complut., 29 (2016),  295--340.


\bibitem{Hashimoto et. al2020}
Y. Hashimoto, I. Ishikawa, M. Ikeda, Y. Matsuo, and Y. Kawahara, Krylov subspace method for nonlinear dynamical systems with random noise, J. Mach. Learn. Res., 21 (2020), no. 172, 6954--6982.

\bibitem{Hatano-Ikeda-Ishikawa-Sawano2021}
N. Hatano, M. Ikeda, I. Ishikawa, and Y. Sawano, Boundedness of composition operators on Morrey spaces and weak Morrey spaces. J. Inequal. Appl., 2021 (2021), no. 69, 1--15.

\bibitem{Hatano-Kawasumi-Ono2023}
N. Hatano, R. Kawasumi, and T. Ono, Predual of Weak Orlicz Spaces and Its Applications to Fefferman--Stein Vector-valued Maximal Inequality, Tokyo J. Math., 46 (2023), no. 1, 125--160.

\bibitem{Iida(2021)}
T. Iida, Orlicz-fractional maximal operators in Morrey and Orlicz--Morrey spaces. Positivity, 25 (2021), no.1, 243--272.


\bibitem{Ikeda-Ishikawa-Taniguchi2024}
M. Ikeda, I. Ishikawa, and K. Taniguchi, Boundedness of composition operators on higher order Besov spaces in one dimension, Math. Ann., 388 (2024), no. 4, 4487--4510.

\bibitem{Ikeda-Ishikawa-Schlosser2022}
M. Ikeda, I. Ishikawa, and C. Schlosser, Koopman and Perron-–Frobenius operators on reproducing kernel Banach spaces, Chaos, 32 (2022), no. 12, 20 pages.

\bibitem{Kawasumi-Nakai-Shi2023}
R. Kawasumi, E. Nakai, and M. Shi, Characterization of the boundedness of generalized fractional integral and
maximal operators on Orlicz--Morrey and weak Orlicz--Morrey spaces, Math. Nachr., 296 (2023), no. 4, 1483--1503.

\bibitem{Koopman1931}
B. O. Koopman, Hamiltonian systems and transformation in Hilbert space, Proc. Natl. Acad. Sci. USA, 17 (1931), no. 5, 315--318.


\bibitem{Koopman-Neumann1932}
B. O. Koopman and J. V. Neumann, Dynamical systems of continuous spectra.  Proc. Natl. Acad. Sci. USA, 18 (1932), no. 3, 255--263.

\bibitem{Mezic2005}
I. Mezi\'c, Spectral properties of dynamical systems, model reduction and decompositions, Nonlinear Dyn., 41 (2005), 309--325.


\bibitem{Mizuhara1991}
T. Mizuhara, Boundedness of some classical operators on generalized Morrey spaces, Harmonic Analysis, ICM-90 Satellite Conference Proceedings, Springer-Verlag, Tokyo, (1991), 183--189.


\bibitem{Nakai1994}
E. Nakai, Hardy--Littlewood maximal operator, singular integral operators and
the Riesz potentials on generalized Morrey spaces, Math. Nachr., 166 (1994), no. 1, 95--103.

\bibitem{Nakai2004KIT}
E. Nakai, Generalized fractional integrals on Orlicz--Morrey spaces, Banach and Function Spaces
(Kitakyushu, 2003), Yokohama Publishers, Yokohama, (2004), 323--333.

\bibitem{ONeil1965}
R. O'Neil, Fractional Integration in Orlicz Space, Trans. Amer. Math. Soc., 115 (1965), 300--328.

\bibitem{Sawano-Fazio-Hakim2020}
Y. Sawano, G. Di Fazio, and D. I. Hakim, Morrey Spaces: Introduction and Applications to Integral Operators and PDE's, Volumes I and II. Chapman and Hall/CRC~(2020)

\bibitem{Sawano2019}
Y. Sawano, A thought on generalized Morrey spaces, J. Indonesian Math. Soc., 25(2019), no. 3, 210--281.

\bibitem{Sawano-Sugano-Tanaka2011}
Y.~Sawano, S.~Sugano, and H.~Tanaka, Orlicz--Morrey Spaces and Fractional Operators, Potential Anal., 36 (2012), no. 4, 517--556.

\bibitem{Shi-Arai-Nakai2020Banach}
M.~Shi, R.~Arai, and E.~Nakai,
Commutators of integral operators with functions in Campanato spaces on Orlicz--Morrey spaces, Banach J. Math. Anal. 15 (2021), no. 22, 1--41.

\bibitem{Singh1976}
R. K. Singh, Composition operators induced by rational functions, Proc. Am. Math. Soc., 59 (1976), no. 2, 329--333.

\bibitem{Zhang-Zuazua2023}
C. Zhang and E. Zuazua, A quantitative analysis of Koopman operator methods for system identification and predictions, C. R. M\'ecanique, Online first, 351 (2023), no. S1, 1--31.

\end{thebibliography}
\end{document}